\documentclass[11pt,a4paper,reqno]{amsart}

\usepackage[lmargin=1.2in, rmargin= 1.2in, tmargin=1.5in]{geometry}

\title{Discrete vertices in simplicial objects internal to a monoidal category}

\author{Arne Mertens}
\address[Arne Mertens]{Universiteit Antwerpen, Departement Wiskunde, Middelheimcampus,
Middelheimlaan 1,
2020 Antwerp, Belgium}
\email{arne.mertens@uantwerpen.be}

\thanks{
This project has received funding from the European Research Council (ERC) under the European Union’s Horizon 2020 research and innovation programme (grant agreement No. 817762). The author was a predoctoral fellow of the Research Foundation -  Flanders (FWO), file number 1137921N. The current paper is based on part of the resulting PhD thesis \cite{mertens2022templicial}.
}

\makeatletter
\@namedef{subjclassname@2020}{
  \textup{2020} Mathematics Subject Classification}
\makeatother

\subjclass[2020]{18M05, 18N50}

\keywords{}

\usepackage{graphicx}
\usepackage[english]{babel}
\usepackage{amsmath}
\usepackage{amssymb}
\usepackage{amsthm}
\usepackage[all,cmtip]{xy}
\usepackage{cancel}
\usepackage[alpine]{ifsym}
\usepackage{enumerate}
\usepackage{stmaryrd}
\usepackage{tikz}
\usepackage{tikz-cd}
\usetikzlibrary{matrix}
\usepackage[toc,page]{appendix}
\usepackage{hyperref}
\usepackage[style=alphabetic, maxnames = 50, maxalphanames=50, urldate=long]{biblatex}
\addbibresource{bibfile.bib}

%Basic operators

%Basic categorical operators
\DeclareMathOperator{\Ob}{Ob}

\DeclareMathOperator{\id}{id}

\DeclareMathOperator{\Fun}{Fun}

%Special categories
\DeclareMathOperator{\Set}{Set}

\DeclareMathOperator{\Comon}{Comon}
\DeclareMathOperator{\Ab}{Ab}
\DeclareMathOperator{\Mod}{Mod}

\DeclareMathOperator{\Coalg}{Coalg}

\DeclareMathOperator{\SSet}{SSet}

\DeclareMathOperator{\Colax}{Colax}

\DeclareMathOperator{\Quiv}{Quiv}
\DeclareMathOperator{\Cat}{Cat}

%Simplicial set operators

%Commands
\newcommand{\fint}{\mathbf{\Delta}_{f}} %category of finite intervals
 %2-category of finite intervals over S
\newcommand{\simp}{\mathbf{\Delta}} %simplicial walking category
 %augmented simplicial walking category
 %narrow simplicial walking category
 %category of necklaces
 %category of flagged necklaces
 %category of flanked flagged necklaces
\newcommand{\ts}{S_{\otimes}} %category of ts-objects
 %category of Fs-objects

%Presentation

\newtheorem{Thm}{Theorem}[section]
\newtheorem*{Thm*}{Theorem}
\newtheorem{Lem}[Thm]{Lemma}
\newtheorem{Prop}[Thm]{Proposition}
\newtheorem{Cor}[Thm]{Corollary}

\theoremstyle{definition}
\newtheorem{Def}[Thm]{Definition}
\newtheorem{Ex}[Thm]{Example}
\newtheorem{Exs}[Thm]{Examples}
\newtheorem{Con}[Thm]{Construction}

\theoremstyle{remark}
\newtheorem{Rem}[Thm]{Remark}

\begin{document}

\begin{abstract}
We follow the work of Aguiar \cite{aguiar1997internal} on internal categories and introduce simplicial objects internal to a monoidal category as certain colax monoidal functors. Then we compare three approaches to equipping them with a discrete set of vertices. We introduce based colax monoidal functors and show that under suitable conditions they are equivalent to the templicial objects defined in  \cite{lowen2024enriched}. We also compare templicial objects to the enriched Segal precategories appearing in \cite{lurie2009goodwillie}, \cite{simpson2012homotopy} and \cite{bacard2010Segal}, and show that they are equivalent for cartesian monoidal categories, but not in general.
\end{abstract}

\maketitle

\setcounter{tocdepth}{2}

\tableofcontents

\section{Introduction}\label{section: Introduction}

In \cite{aguiar1997internal}, Aguiar introduced a theory of graphs and categories internal to a monoidal category $\mathcal{V}$. A \emph{graph internal to $\mathcal{V}$} consists of a comonoid $G_{0}$ in $\mathcal{V}$ (the object of objects) and a bicomodule $G_{1}$ over $G_{0}$ (the object of edges). Endowing the pair $(G_{0},G_{1})$ with an appropriate composition law and identities then produces the notion of a \emph{category internal to $\mathcal{V}$}. Let us denote the category of such internal categories by $\Cat_{\otimes}(\mathcal{V})$. Note that if $\mathcal{V}$ is cartesian monoidal - i.e. the monoidal product $\otimes$ is the cartesian product - then any object of $\mathcal{V}$ has a unique comonoid structure. More specifically, we have an equivalence of categories $\Comon(\mathcal{V})\simeq \mathcal{V}$. As a consequence, graphs and categories internal to $\mathcal{V}$ recover the usual notions of internal graphs and categories when $\mathcal{V}$ is cartesian. On the other hand, if the object of objects is assumed to be \emph{discrete} - that is $G_{0}\simeq \coprod_{a\in S}I$ for some set $S$ - then for suitable $\mathcal{V}$ this precisely recovers $\mathcal{V}$-enriched categories with object set $S$.

$$
\Cat(\mathcal{V})\underset{\text{if }\otimes = \times}{\simeq} \Cat_{\otimes}(\mathcal{V})\supseteq \mathcal{V}\Cat
$$

This picture extends nicely to higher dimensions as well, where we can consider the higher dimensional analogue of graphs to be simplicial sets $\SSet$. In order to do this, we must first reformulate the definition of a simplicial object a little, and introduce some coalgebraic structure just like for Aguiar's internal graphs.

Consider the category $\fint$ of \emph{finite intervals}. It is the subcategory of the usual simplex category $\simp$ containing all order morphisms $f: [m]\rightarrow [n]$ which preserve the endpoints, i.e. $f(0) = 0$ and $f(m) = n$. Unlike $\simp$, $\fint$ carries a monoidal structure $(+,[0])$ which is given by $[m] + [n] = [m+n]$ on objects. Then we have the following result by Leinster.

\begin{Prop}[\cite{leinster2000homotopy}, Proposition 3.1.7]\label{proposition: Leinster}
Let $\mathcal{V}$ be a cartesian monoidal category. Then we have an equivalence of categories
$$
\Colax(\fint^{op},\mathcal{V})\simeq S\mathcal{V}
$$
between colax monoidal functors $\fint^{op}\rightarrow \mathcal{V}$ and simplicial objects in $\mathcal{V}$.
\end{Prop}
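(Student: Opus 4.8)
The plan is to construct an explicit pair of functors between the two categories and verify that they are mutually inverse. The structural fact that makes everything work is the \emph{active--inert factorization} of the simplex category: every morphism $f\colon [m]\to[n]$ of $\simp$ factors uniquely as an endpoint-preserving map $[m]\to[k]$ followed by an \emph{interval inclusion} $[k]\hookrightarrow [n]$ onto the subinterval $\{f(0),\dots,f(m)\}$, where $k=f(m)-f(0)$. The endpoint-preserving (active) maps are precisely the morphisms of $\fint$, while the interval inclusions (inert maps) are generated under composition by the two outer cofaces $\delta_0,\delta_n\colon [n-1]\to[n]$, whose images are $\{1,\dots,n\}$ and $\{0,\dots,n-1\}$. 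Dually, on $\simp^{op}$ this expresses each morphism through the inner face and degeneracy operators (living in $\fint^{op}$) together with the outer face operators, and the whole point will be that the colax structure on $F\colon\fint^{op}\to\mathcal V$ supplies exactly the latter.

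First I would define $\Psi\colon S\mathcal V\to \Colax(\fint^{op},\mathcal V)$. Given a simplicial object $X$, restrict it along the inclusion $\fint^{op}\hookrightarrow\simp^{op}$ to get the underlying functor, and equip it with colaxity maps
\[
X_{m+n}\xrightarrow{\ (X(\iota_1),\,X(\iota_2))\ } X_m\times X_n,
\]
where $\iota_1\colon[m]\hookrightarrow[m+n]$ and $\iota_2\colon[n]\hookrightarrow[m+n]$ are the interval inclusions onto $\{0,\dots,m\}$ and $\{m,\dots,m+n\}$; these exist by the universal property of the product, which is where cartesianness enters. The counit $X_0=X([0])\to \mathbf 1$ is forced, being the unique map to the terminal object. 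Conversely I would define $\Phi$ by setting $X_n:=F([n])$, taking the inner faces and degeneracies to be the images under $F$ of the corresponding active maps, and defining the outer faces from the colaxity: $d_0$ is $F([n])\to F([1])\times F([n-1])$ followed by the second projection (using $[n]=[1]+[n-1]$), and $d_n$ is $F([n])\to F([n-1])\times F([1])$ followed by the first projection (using $[n]=[n-1]+[1]$).

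The bulk of the work, and the main obstacle, is checking that these assignments are well defined and mutually inverse. For $\Psi$ one must show the colaxity transformation is natural in both variables and satisfies the associativity and unit coherences; each of these reduces, via the universal property of products, to an identity between composites of face maps in $X$, which holds by functoriality of $X$ and the combinatorics of the inclusions $\iota_j$. For $\Phi$ one must verify all simplicial identities for the operators just defined: the identities internal to $\fint^{op}$ follow from functoriality of $F$, while every identity involving an outer face must be extracted from the naturality, associativity and unit axioms of the colax structure. This last matching is the delicate step, since it is precisely here that the coherence data of a colax monoidal functor has to reproduce, neither more nor less, the relations that $\simp$ carries beyond $\fint$; organizing the verification along the active--inert factorization keeps it manageable, as each mixed relation corresponds to a single instance of naturality or associativity. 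Finally, both round-trips are essentially identities by construction: $\Phi\Psi\cong\id$ is immediate on objects and active maps, and $\Psi\Phi\cong\id$ amounts to recovering the original colaxity map as $(\mathrm{pr}_1\circ\gamma,\mathrm{pr}_2\circ\gamma)=\gamma$, which is forced by the universal property of the product once associativity of $\gamma$ identifies $X(\iota_j)$ with the appropriate projection of $\gamma$.
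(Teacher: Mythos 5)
The paper itself offers no proof of this proposition: it is imported verbatim from Leinster (\cite{leinster2000homotopy}, Proposition 3.1.7), so the only meaningful comparison is with the standard argument for this fact, which is exactly what you give. Your construction is correct and is essentially the canonical one: restricting a simplicial object along $\fint^{op}\hookrightarrow\simp^{op}$ and taking colaxity maps $(X(\iota_1),X(\iota_2))$ (with the counit forced by terminality of the unit), and conversely reconstructing the outer faces as projections of the comultiplication; moreover you correctly locate the two genuinely delicate points, namely the mixed simplicial identities for $\Phi$ (each of which is one instance of naturality of $\gamma$ combined with coassociativity or counitality) and the round trip $\Psi\Phi\cong\id$, which indeed reduces, by induction on coassociativity, to the identities $X(\iota_1)=\mathrm{pr}_1\circ\gamma_{m,n}$ and $X(\iota_2)=\mathrm{pr}_2\circ\gamma_{m,n}$, after which the universal property of the product recovers $\gamma$. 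The only item left entirely implicit is the action of both functors on morphisms: a simplicial map is automatically a monoidal natural transformation for the induced colax structures (by naturality at $\iota_1,\iota_2$), and conversely monoidality of a natural transformation forces compatibility with the reconstructed outer faces. This is routine and does not affect correctness, but a complete write-up should record it, since the statement is an equivalence of categories and not merely a bijection on objects.
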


For instance the classical nerve functor $N: \Cat\rightarrow \SSet$ straightforwardly generalizes to a functor $N_{\mathcal{V}}: \Cat_{\otimes}(\mathcal{V})\rightarrow \Colax(\fint^{op},\mathcal{V})$, whereas in general we do not obtain a nerve $\Cat_{\otimes}(\mathcal{V})\rightarrow S\mathcal{V}$ in the absence of cartesian projections. We will come back to this point in Section \ref{section: Why colax monoidal functors?}. This suggests that - even when $\mathcal{V}$ is not cartesian - we may interpret colax monoidal functors $\fint^{op}\rightarrow \mathcal{V}$ as \emph{simplicial objects internal to $\mathcal{V}$}.

The next question then presents itself naturally. In parallel to $\mathcal{V}$-enriched categories, how do we discretize the object of vertices $X_{0}$ of an internal simplicial object $X: \fint^{op}\rightarrow \mathcal{V}$? In this note, we present and compare three possible approaches.

\begin{enumerate}
\item The most obvious way to discretize the vertices of a simplicial object $X$ internal to $\mathcal{V}$ is to require $X_{0}\simeq \coprod_{a\in S}I$ for some set $S$. In the cartesian case, we obtain a category of \emph{based simplicial objects} $S_{b}\mathcal{V}$. In the general case we obtain a category $\Colax_{b}(\fint^{op},\mathcal{V})$ of \emph{based colax monoidal functors}, see Definition \ref{definition: based colax monoidal functor}.

\item Assume first that $\mathcal{V}$ is cartesian, so that we are again working with simplicial objects $S\mathcal{V}$. In \cite{lurie2009goodwillie}, Lurie introduced enriched Segal precategories in order to define an enriched variant of classical Segal categories \cite{hirschowitz1998descente}. They were further studied by Simpson in \cite{simpson2012homotopy}. In this case the discretization of the vertices is achieved by replacing the simplex category $\simp$ by a labelled analogue $\simp_{S}$ for a given set $S$. One then considers functors $\simp^{op}_{S}\rightarrow \mathcal{V}$ instead, yielding the category $\mathbf{PC}(\mathcal{V})$, see Definition \ref{definition: enriched Segal precat.}. Building on this, Bacard considered a generalization of this approach for general (not necessarily cartesian) $\mathcal{V}$ in \cite{bacard2010Segal}, also using colax functors. These approaches are discussed in Section \ref{section: Templicial objects versus enriched Segal precategories}.

\item In \cite{lowen2024enriched}, templicial objects were introduced by Wendy Lowen and the present author, and this note represents part of a bigger project to study them. They are simplicial objects internal to $\mathcal{V}$ where the discrete set of vertices is realized by means of $\mathcal{V}$-enriched quivers. So we consider colax monoidal functors of the form $\fint^{op}\rightarrow \mathcal{V}\Quiv_{S}$ and denote their category by $\ts\mathcal{V}$, see Definition \ref{definition: templicial obj.}. 
\end{enumerate}

Throughout the text, we let $(\mathcal{V},\otimes,I)$ be a fixed bicomplete, symmetric monoidal closed category (i.e. a B\'enabou cosmos in the sense of \cite{street1974elementary}). 

In Section \ref{section: Why colax monoidal functors?}, we make the case that passing to colax monoidal functors $\fint^{op}\rightarrow \mathcal{V}$ is necessary in the context of non-cartesian monoidal categories. The main argument is that this allows to define well-behaved nerves of $\mathcal{V}$-enriched categories, which simplicial objects do not.

In Section \ref{section: Templicial objects versus based colax monoidal functors}, we compare approaches $(1)$ and $(3)$, and we identify conditions on $\mathcal{V}$ for which they are equivalent. We call $\mathcal{V}$ \emph{decomposing} if it satisfies these conditions (Definition \ref{definition: decomposing monoidal category}) and the equivalence $\ts\mathcal{V}\simeq \Colax_{b}(\fint^{op},\mathcal{V})$ is proven in Theorem \ref{theorem: comparison is equiv.}.

In Section \ref{section: Templicial objects versus enriched Segal precategories}, we compare approaches $(2)$ and $(3)$. In the cartesian case, in \cite{simpson2012homotopy}, Simpson introduced the conditions (DISJ) on $\mathcal{V}$ under which $\mathbf{PC}(\mathcal{V})\simeq S_{b}\mathcal{V}$ (see Defintiion \ref{definition: DISJ}). We show that (DISJ) implies decomposing for $\mathcal{V}$ (Proposition \ref{proposition: DISJ implies decomposing}), whence under (DISJ) all three approaches become equivalent (Corollary \ref{corollary: temp. objs. vs precats.}). Finally, in the non-cartesian case we cannot expect such an equivalence to hold even for decomposing $\mathcal{V}$, and we show in \S\ref{subsection: The non-cartesian case} that for $\mathcal{V} = \Mod(k)$, the templicial dg-nerve $dg\Cat_{k}\rightarrow \ts\Mod(k)$ from \cite{lowen2023frobenius} does not factor through $\mathbf{PC}(\Mod(k))$ (see Example \ref{example: failure of dg-nerve for precats}).\\

\noindent \emph{Acknowledgement.} The author is grateful to Boris Shoikhet for pointing out [Bac10] and to Lander Hermans for [Agu97], as well as interesting discussions on the subject. The author would also like to thank Violeta Borges Marques for discussions on the counterexamples at the end of this note and Wendy Lowen for valuable feedback. Further thanks are extended to Bernhard Keller, Tom Leinster, Michel Van den Bergh and Ittay Weiss for interesting comments and questions on the project.

\section{Why colax monoidal functors?}\label{section: Why colax monoidal functors?}

Recall the classical nerve functor from small categories to simplicial sets.
$$
N: \Cat\hookrightarrow \SSet
$$
Given a small category $\mathcal{C}$, $N(\mathcal{C})$ is the simplicial set whose set of $n$-simplices is given by
\begin{equation}\label{equation: classical nerve}
N(\mathcal{C})_{n} = \coprod_{A_{0},...,A_{n}\in \Ob(\mathcal{C})}\mathcal{C}(A_{0},A_{1})\times ...\times \mathcal{C}(A_{n-1},A_{n})
\end{equation}
The inner face maps $d_{j}$ and degeneracy maps $s_{i}$ are defined by
\begin{align*}
d_{j}&: X_{n}\rightarrow X_{n-1}: (f_{1},\dots,f_{n})\mapsto (f_{1},\dots,f_{i}\circ f_{i-1},\dots,f_{n})\\
s_{i}&: X_{n}\rightarrow X_{n+1}: (f_{1},\dots,f_{n})\mapsto (f_{1},\dots,f_{i-1},\id,f_{i},\dots,f_{n})
\end{align*}
for all $0 < j < n$ and $0\leq i\leq n$. Thus the $d_{j}$ compose two consecutive morphisms in a sequence and the $s_{i}$ insert an identity. The outer face maps $d_{0}$ and $d_{n}$ are defined by deleting either the first or the last entry in a sequence, e.g.
$$
d_{0}: X_{n}\rightarrow X_{n-1}: (f_{1},\dots,f_{n})\mapsto (f_{2},\dots,f_{n})
$$

Now for a $\mathcal{V}$-enriched category, we can wonder what the nerve of $\mathcal{C}$ should be. At first, one might guess that this should yield a simplicial object $N(\mathcal{C})\in S\mathcal{V}$. Analogous to \eqref{equation: classical nerve}, we put (for $n = 0$ this yields $N(\mathcal{C})_{0} = \coprod_{A\in \Ob(\mathcal{C})}I$):
\begin{equation}\label{equation: enriched nerve}
N(\mathcal{C})_{n} = \coprod_{A_{0},...,A_{n}\in \Ob(\mathcal{C})}\mathcal{C}(A_{0},A_{1})\otimes ...\otimes \mathcal{C}(A_{n-1},A_{n})\in \mathcal{V}
\end{equation}
It is easy to see that we can define inner face morphisms and degeneracy morphisms in the same way as above. However, for a general (non-cartesian) monoidal category $\mathcal{V}$, we cannot define the outer face maps because there are no projection maps
\begin{equation}\label{diagram: failure of outer face map}
\mathcal{C}(A_{0},A_{1})\otimes\mathcal{C}(A_{1},A_{2})\otimes \dots\otimes \mathcal{C}(A_{n-1},A_{n})\rightarrow \mathcal{C}(A_{1},A_{2})\otimes\dots\otimes \mathcal{C}(A_{n-1},A_{n})
\end{equation}
Hence, we do not obtain a simplicial object in this way. In fact, assuming some compatibility with the classical nerve, no other definition of $N(\mathcal{C})$ will do the trick either.

\begin{Rem}
Let $k$ be a unital commutative ring and set $\mathcal{V} = \Mod(k)$ with the tensor product $\otimes$ of $k$-modules. Then $\mathcal{V}$ is certainly not cartesian monoidal. Consider the category $k\Cat$ of small $k$-linear (that is, $\Mod(k)$-enriched) categories. Then there is \emph{no} functor
$$
N_{k}: k\Cat\rightarrow S\Mod(k)
$$
such that the following commutative square commutes (up to natural isomorphism), where the vertical arrows represent the forgetful functors.
$$
\xymatrix{
k\Cat\ar[r]^(0.4){N_{k}}\ar[d]_{\mathcal{U}} & {S\Mod(k)}\ar[d]^{U} \\
\Cat\ar[r]_{N} & \SSet
}
$$
In fact, it doesn't even commute after applying the homotopy category functor $h: \SSet\rightarrow \Cat$ (which is left-adjoint to $N$). Indeed, otherwise we would have an equivalence of categories
$$
hUN_{k}(\mathcal{C})\simeq hNU(\mathcal{C})\simeq U(\mathcal{C})
$$
for all small $k$-linear categories $\mathcal{C}$. So since $UN_{k}(\mathcal{C})\in S\Mod(k)$ is always a Kan complex, $U(\mathcal{C})$ is a groupoid. But no $k$-linear category is a groupoid unless it is trivial.
\end{Rem}

One might object that if $\mathcal{V}$ is \emph{semi-cartesian} (i.e. the monoidal unit $I$ is a terminal object), then one can still define outer face maps \eqref{diagram: failure of outer face map} by means of the unique morphisms $\mathcal{C}(A,B)\rightarrow I$. Indeed, this will still yield a well-defined simplicial object $N'(\mathcal{C})\in S\mathcal{V}$ and we thus obtain a functor
$$
N': \mathcal{V}\Cat\rightarrow S\mathcal{V}
$$
as long as $\mathcal{V}$ is semi-cartesian. However, this nerve functor will in general not be full.

\begin{Ex}\label{example: naive nerve is not full}
Consider the $\mathbb{Z}$-algebra $A = \mathbb{Z}[X]$ with augmentation $\epsilon: A\rightarrow \mathbb{Z}: X\mapsto 0$.
Then $A$ can be considered as a one-object category enriched over $\mathcal{V} = \Mod(\mathbb{Z})/\mathbb{Z}$, which is semi-cartesian. Then we have, for all $n\geq 0$:
$$
N(A)_{n} = A^{\otimes n}\simeq \mathbb{Z}[X_{1},...,X_{n}]
$$
with augmentation $\mathbb{Z}[X_{1},...,X_{n}]\rightarrow \mathbb{Z}: X_{i}\mapsto 0$. The face and degeneracy maps for $0\leq j\leq n$ are in fact ring morphisms and on the $X_{i}$ ($1\leq i\leq n$) they are given by:
\begin{align*}
d_{j}(X_{i}) = 
\begin{cases}
X_{i} & \text{if }i < j\text{ or }i = j < n\\
X_{i-1} & \text{if }i > j\\
0 & \text{otherwise}
\end{cases}
\quad \text{and}\quad 
s_{j}(X_{i})\mapsto
\begin{cases}
X_{i} & \text{if }i < j\\
X_{i+1} & \text{if }i\geq j\\
\end{cases}
\end{align*}
We define a simplicial $\mathbb{Z}$-linear map $\alpha: N(A)\rightarrow N(A)$ by setting
$$
\alpha_{n}(X^{i_{1}}_{1}\cdots X^{i_{n}}_{n}) =
\begin{cases}
X^{i_{1}}_{1}\cdots X^{i_{n}}_{n} & \text{if }i_{1} + ... + i_{n}\leq 1\\
0 & \text{otherwise}
\end{cases}
$$
for all $n\geq 0$ and $i_{1},...,i_{n}\geq 0$. It is easily verified to be compatible with the face and degeneracy maps. Moreover, it clearly respects the augmentations.

Now for any ring homomorphism $f: A\rightarrow A$, $N(f) = \alpha$ would imply that $f = \alpha_{1}$. But $\alpha_{1}$ is not a ring homomorphism. Hence, the functor $N': \mathcal{V}\Cat\rightarrow S\mathcal{V}$ is not full.
\end{Ex}

However, the above data \eqref{equation: enriched nerve} can be neatly organised into a colax monoidal functor
$$
X: \fint^{op}\rightarrow \mathcal{V}
$$
which still has inner face maps and degeneracy maps, but the outer face maps are replaced by comultiplication maps. This construction was investigated in detail in the context of templicial objects in \cite{lowen2024enriched}. The resulting nerve functor has many desirable properties.

\begin{Prop}[\cite{lowen2024enriched}, \S 2.B]
There exists a fully faithful right-adjoint functor
$$
N_{\mathcal{V}}: \mathcal{V}\Cat\hookrightarrow \ts\mathcal{V}
$$
such that $\tilde{U}\circ N_{\mathcal{V}}\simeq N\circ \mathcal{U}$, where $\tilde{U}: \ts\mathcal{V}\rightarrow \SSet$ is the underlying simplicial set functor. Moreover, $N_{\mathcal{V}}$ recovers the classical nerve $N: \Cat\rightarrow \SSet$ when $\mathcal{V} = \Set$.
\end{Prop}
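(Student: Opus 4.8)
The plan is to build $N_{\mathcal{V}}$ by hand, exhibit an explicit left adjoint, and then deduce full faithfulness from the counit being invertible; the compatibility with $\tilde{U}$ and the $\Set$-case are then formal unwindings.

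First I would define the nerve directly. Given $\mathcal{C}\in\mathcal{V}\Cat$ with object set $S=\Ob(\mathcal{C})$, set $N_{\mathcal{V}}(\mathcal{C})$ to be the templicial object over $S$ whose $n$-th quiver is
$$
N_{\mathcal{V}}(\mathcal{C})_{n}(a,b)=\coprod_{a=a_{0},\dots,a_{n}=b}\mathcal{C}(a_{0},a_{1})\otimes\cdots\otimes\mathcal{C}(a_{n-1},a_{n}),
$$
with inner face maps given by the composition morphisms of $\mathcal{C}$, degeneracies by the units $I\to\mathcal{C}(a,a)$, and comultiplication $N_{\mathcal{V}}(\mathcal{C})_{p+q}\to N_{\mathcal{V}}(\mathcal{C})_{p}\otimes_{S}N_{\mathcal{V}}(\mathcal{C})_{q}$ given by splitting a chain of length $p+q$ after its $p$-th vertex. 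Checking the colax monoidal axioms and the compatibility with inner faces and degeneracies is a routine consequence of associativity and unitality in $\mathcal{C}$, and functoriality in $\mathcal{C}$ is immediate since a $\mathcal{V}$-functor acts on objects and on hom-objects. The structural fact I would use throughout is that the iterated comultiplication exhibits $N_{\mathcal{V}}(\mathcal{C})_{n}$ as the $n$-fold quiver tensor power of $N_{\mathcal{V}}(\mathcal{C})_{1}=\mathcal{C}$; that is, $N_{\mathcal{V}}(\mathcal{C})$ is strictly Segal.

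To show $N_{\mathcal{V}}$ is a right adjoint I would construct its left adjoint $h_{\mathcal{V}}\colon\ts\mathcal{V}\to\mathcal{V}\Cat$ explicitly. For a templicial object $X$ over $S$, let $\mathbb{F}(X_{1})$ be the free $\mathcal{V}$-category on the hom-quiver $X_{1}$, and define $h_{\mathcal{V}}(X)$ as the quotient $\mathcal{V}$-category of $\mathbb{F}(X_{1})$ by the congruence identifying, for each $2$-simplex, the length-two path $X_{2}\xrightarrow{\mu}X_{1}\otimes_{S}X_{1}$ with the composite edge $X_{2}\xrightarrow{d_{1}}X_{1}$; concretely this is the coequalizer in $\mathcal{V}\Cat$ of these two maps out of $X_{2}$. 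The universal property is then straightforward: a $\mathcal{V}$-functor $h_{\mathcal{V}}(X)\to\mathcal{D}$ is the same as a map of vertex sets together with a quiver morphism $X_{1}\to\mathcal{D}$ respecting the composition relation coming from $X_{2}$ and the unit coming from $s_{0}$. On the other side, since $N_{\mathcal{V}}(\mathcal{D})$ is strictly Segal, a templicial morphism $X\to N_{\mathcal{V}}(\mathcal{D})$ is determined by its levels $0$ and $1$ (its higher components are forced by $\mu_{N_{\mathcal{V}}(\mathcal{D})}^{-1}$), and compatibility with the inner face $d_{1}$ and degeneracy $s_{0}$ reproduces exactly the same relations. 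Matching these two descriptions, naturally in $X$ and $\mathcal{D}$, yields the adjunction $h_{\mathcal{V}}\dashv N_{\mathcal{V}}$.

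Full faithfulness then reduces to checking that the counit $h_{\mathcal{V}}N_{\mathcal{V}}(\mathcal{C})\to\mathcal{C}$ is an isomorphism: in $h_{\mathcal{V}}N_{\mathcal{V}}(\mathcal{C})$ the defining relations collapse every path to its iterated composite in $\mathcal{C}$, so the counit is invertible and $N_{\mathcal{V}}$ is fully faithful. For the compatibility $\tilde{U}\circ N_{\mathcal{V}}\simeq N\circ\mathcal{U}$ I would unwind the definition of the underlying simplicial set functor, using that $\mathcal{U}(\mathcal{C})$ has hom-sets $\mathcal{V}(I,\mathcal{C}(a,b))$ and that $\mathcal{V}(I,-)$ is lax monoidal: both $\tilde{U}(N_{\mathcal{V}}(\mathcal{C}))$ and $N(\mathcal{U}(\mathcal{C}))$ are assembled from the sets $\mathcal{V}(I,\mathcal{C}(a,b))$ along sequences of vertices, and the identification is the natural one dictated by how $\tilde{U}$ produces outer faces from the comultiplication. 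Finally, when $\mathcal{V}=\Set$ we have $I=\ast$, $\otimes=\times$ and $\mathcal{U}=\id$, so the defining formula for $N_{\mathcal{V}}$ together with $\tilde{U}$ reduces verbatim to \eqref{equation: classical nerve}, recovering $N\colon\Cat\to\SSet$.

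The main obstacle I anticipate is genuinely Segal-theoretic and appears twice. First, one must justify carefully that the strict Segal condition makes a templicial morphism into $N_{\mathcal{V}}(\mathcal{D})$ determined by — and freely reconstructible from — its levels $0$ and $1$; this single lemma simultaneously powers the universal property of $h_{\mathcal{V}}$ and the invertibility of the counit. Second, the compatibility with $\tilde{U}$ is delicate precisely because, as the failure of a naive simplicial nerve in Section~\ref{section: Why colax monoidal functors?} shows, there are no projections $\mathcal{C}(a,b)\to I$; the outer faces of $\tilde{U}(N_{\mathcal{V}}(\mathcal{C}))$ must therefore be read off from the comultiplication composed with $\mathcal{V}(I,-)$ rather than from honest face morphisms in $\mathcal{V}$, and the verification that this reproduces the classical outer faces of $N(\mathcal{U}(\mathcal{C}))$ is where the argument requires the most care.
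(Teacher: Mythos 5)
Your overall route is sound, and for what it is worth it mirrors the cited source: the present paper gives no proof of this Proposition (it is imported from \cite{lowen2024enriched}, \S 2.B), and there too the nerve is built from hom-tensor chains indexed by vertex sequences, with full faithfulness obtained from a left adjoint of free-category-modulo-relations type. However, your construction of the left adjoint contains a concrete error: the congruence defining $h_{\mathcal{V}}(X)$ imposes only the composition relation (identifying $d_{1}$ with $\mu_{1,1}$ out of $X_{2}$) and omits the \emph{unit} relation identifying the degenerate edges $s_{0}\colon X_{0}\simeq I_{S}\rightarrow X_{1}$ with the identities (empty paths) of $\mathbb{F}(X_{1})$. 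This is not cosmetic: a templicial morphism $X\rightarrow N_{\mathcal{V}}(\mathcal{D})$ must commute with $s_{0}$, so its level-$1$ component must send degenerate edges to units of $\mathcal{D}$, a condition that $\mathcal{V}$-functors out of your $h_{\mathcal{V}}(X)$ need not satisfy; thus the universal property you then assert (which rightly mentions ``the unit coming from $s_{0}$'') is not the universal property of the object you actually built. Concretely, let $\mathcal{I}$ be the unit $\mathcal{V}$-category (one object, hom-object $I$), so that $N_{\mathcal{V}}(\mathcal{I})_{n}=I$ for all $n$. Then $\mathbb{F}(N_{\mathcal{V}}(\mathcal{I})_{1})$ has endomorphism object $\coprod_{k\geq 0}I$, one summand per path length, and the congruence generated by the composition relation identifies all summands with $k\geq 1$ with one another but never with the $k=0$ summand; the quotient has hom-object $I\amalg I\neq I$, so the counit $h_{\mathcal{V}}N_{\mathcal{V}}(\mathcal{I})\rightarrow\mathcal{I}$ is not invertible and the full-faithfulness argument collapses. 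The repair is straightforward: define $h_{\mathcal{V}}(X)$ by quotienting $\mathbb{F}(X_{1})$ by the congruence generated by \emph{both} the relation $d_{1}\sim(\text{composition})\circ\mu_{1,1}$ on $X_{2}$ and the relation $s_{0}\sim\id$ on $X_{0}$; with this correction your universal property, the adjunction, and the counit computation all go through.

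Two further points. First, the one genuinely substantive lemma in your plan --- that a templicial morphism into a strictly Segal object is determined by, and freely reconstructible from, levels $0$ and $1$ subject to exactly the $d_{1}$- and $s_{0}$-relations --- is correctly identified as the crux but is only flagged, not proved; it needs an induction on the simplicial degree using the colax naturality axioms relating inner faces and degeneracies to $\mu$, and your text gives no indication of how that induction runs. Second, for the compatibility with $\tilde{U}$ you can avoid the delicate direct unwinding you anticipate: in \cite{lowen2024enriched} the functor $\tilde{U}$ arises as the right adjoint of a free functor $F\colon\SSet\rightarrow\ts\mathcal{V}$, so $\tilde{U}(X)_{n}\cong\ts\mathcal{V}(F(\Delta^{n}),X)$. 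Once your adjunction $h_{\mathcal{V}}\dashv N_{\mathcal{V}}$ is established, $\tilde{U}(N_{\mathcal{V}}(\mathcal{C}))_{n}\cong\mathcal{V}\Cat(h_{\mathcal{V}}F(\Delta^{n}),\mathcal{C})$, and $h_{\mathcal{V}}F(\Delta^{n})$ is the free $\mathcal{V}$-category on the linear quiver $0\rightarrow 1\rightarrow\cdots\rightarrow n$ with hom-objects $I$, whence this set is exactly $N(\mathcal{U}(\mathcal{C}))_{n}$, naturally in $[n]$ and $\mathcal{C}$. Note that this computation again uses the unit relation: without it, $h_{\mathcal{V}}F(\Delta^{n})$ is not that free category, which is another symptom of the gap above.
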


\section{Templicial objects versus based colax monoidal functors}\label{section: Templicial objects versus based colax monoidal functors}

In this section, we introduce based colax monoidal functors and compare them to the templicial objects of \cite{lowen2024enriched}. We define a based colax monoidal functor in \S\ref{subsection: Based colax monoidal functors} as an internal simplicial object $X: \fint^{op}\rightarrow \mathcal{V}$ such that the comonoid $X_{0}$ is free on a set, its \emph{base}. Then we recall templicial objects in \S\ref{subsection: Templicial objects}. In \S\ref{subsection: Decomposing monoidal categories} we identify conditions on the monoidal category $\mathcal{V}$ for which these notions become equivalent, and call $\mathcal{V}$ \emph{decomposing} if it satisfies them. The equivalence $\Colax_{b}(\fint^{op},\mathcal{V})\simeq \ts\mathcal{V}$ is then shown in \S\ref{subsection: Comparison}.

\subsection{Based colax monoidal functors}\label{subsection: Based colax monoidal functors}

Let us define $S_{b}\mathcal{V}$ as the $2$-pullback
\begin{equation}\label{diagram: based simp. objs.}
\begin{tikzcd}
	{S_{b}\mathcal{V}} & {S\mathcal{V}} \\
	\Set & {\mathcal{V}}
	\arrow[from=1-1, to=2-1]
	\arrow[from=1-1, to=1-2]
	\arrow["{(-)_{0}}", from=1-2, to=2-2]
	\arrow["F"', from=2-1, to=2-2]
\end{tikzcd}
\end{equation}
In other words, $S_{b}\mathcal{V}$ is the category of simplicial objects $X$ in $\mathcal{V}$ along with a set $S$ and an isomorphism $X_{0}\simeq F(S)$ in $\mathcal{V}$. We can make a similar construction for simplicial objects internal to $\mathcal{V}$ as a monoidal category.

\begin{Rem}\label{remark: comonoids in a cartesian category}
Note that if $\mathcal{V}$ is cartesian monoidal, every object $A\in \mathcal{V}$ has a unique comonoid structure with the diagonal $\Delta: A\rightarrow A\times A$ as comultiplication and the terminal map $t: A\rightarrow 1$ as counit. This extends to an equivalence of categories:
$$
\mathcal{V}\simeq \Comon(\mathcal{V})
$$
In particular, this applies for the cartesian category of sets $(\Set,\times,\{*\})$.

Now for a general monoidal category $(\mathcal{V},\otimes,I)$, the free functor $F: \Set\rightarrow \mathcal{V}: S\mapsto \coprod_{a\in S}I$ is strong monoidal and thus we have an induced functor
$$
F: \Set\simeq \Comon(\Set)\rightarrow \Comon(\mathcal{V})
$$
\end{Rem}

\begin{Def}\label{definition: based colax monoidal functor}
Let $(X,\mu,\epsilon): \fint^{op}\rightarrow \mathcal{V}$ be a colax monoidal functor. Then $X_{0}$ has the structure of a comonoid with comultiplication given by $\mu_{0,0}: X_{0}\rightarrow X_{0}\otimes X_{0}$ and counit given by $\epsilon: X_{0}\rightarrow I$. We call a set $S$ a \emph{base} of $X$ if it comes equipped with an isomorphism of comonoids $\varphi: X_{0}\xrightarrow{\sim} F(S)$. We call the triple $(X,S,\varphi)$ a \emph{based colax monoidal functor}.

Consider the functor
$$
(-)_{0}: \Colax(\fint^{op},\mathcal{V})\rightarrow \Comon(\mathcal{V}): (X,\mu,\epsilon)\mapsto (X_{0},\mu_{0,0},\epsilon)
$$
We define the category $\Colax_{b}(\fint^{op},\mathcal{V})$ by the $2$-pullback
\[\begin{tikzcd}
	{\Colax_{b}(\fint^{op},\mathcal{V})} & {\Colax(\fint^{op},\mathcal{V})} \\
	\Set & {\Comon(\mathcal{V})}
	\arrow[from=1-1, to=2-1]
	\arrow[from=1-1, to=1-2]
	\arrow["{(-)_{0}}", from=1-2, to=2-2]
	\arrow["F"', from=2-1, to=2-2]
\end{tikzcd}\]
Note that its objects are precisely the based colax monoidal functors.

A morphism $X\rightarrow Y$ in $\Colax_{b}(\fint^{op},\mathcal{V})$ with respective bases $S$ and $T$ is a monoidal natural transformation $\alpha$ together with a map of sets $f: S\rightarrow T$ such that through the isomorphisms $X_{0}\simeq F(S)$ and $Y_{0}\simeq F(T)$, $\alpha_{0}$ is induced by $f$.
\end{Def}

\begin{Rem}\label{remark: based colax functors in a cartesian category}
Note that by Proposition \ref{proposition: Leinster} and Remark \ref{remark: comonoids in a cartesian category}, we have an equivalence of categories when $\mathcal{V}$ is cartesian monoidal:
$$
S_{b}\mathcal{V}\simeq \Colax_{b}(\fint^{op},\mathcal{V})
$$
In particular, if $\mathcal{V}$ is also decomposing (Definition \ref{definition: decomposing monoidal category}), we will have an equivalence of categories $\ts\mathcal{V}\simeq S_{b}\mathcal{V}$ by Theorem \ref{theorem: comparison is equiv.} below.
\end{Rem}

\subsection{Templicial objects}\label{subsection: Templicial objects}

The category $\ts\mathcal{V}$ of templicial objects in $\mathcal{V}$ was introduced in \cite{lowen2024enriched} as a generalization of simplicial sets, in order to define an enriched variant of Joyal's quasi-categories \cite{joyal2002quasi}. Before we can define them formally however, we must discuss some preliminaries about categories of enriched quivers.

Given a set $S$, we denote by $\mathcal{V}\Quiv_{S}$ the category of \emph{$\mathcal{V}$-enriched quivers with vertex set $S$}. That is, its objects are collections $Q = (Q(a,b))_{a,b\in S}$ of objects $Q(a,b)\in \mathcal{V}$ and a morphism $f: Q\rightarrow P$ is a collection of morphisms $(f_{a,b}: Q(a,b)\rightarrow P(a,b))_{a,b\in S}$ in $\mathcal{V}$. The category $\mathcal{V}\Quiv_{S}$ carries a monoidal structure $(\otimes_{S},I_{S})$ given as follows:
$$
(Q\otimes_{S} P)(a,b) = \coprod_{c\in S}Q(a,c)\otimes P(c,b)\quad \text{and}\quad I_{S}(a,b) =
\begin{cases}
I & \text{if }a = b\\
0 & \text{if }a\neq b
\end{cases}
$$
for all $Q,P\in \mathcal{V}\Quiv_{S}$ and $a,b\in S$. Further, given a map of sets $f: S\rightarrow T$. We have an induced lax monoidal functor $f^{*}: \mathcal{W}\Quiv_{T}\rightarrow \mathcal{W}\Quiv_{S}$ given by $f^{*}(Q)(a,b) = Q(f(a),f(b))$ for all $\mathcal{W}$-enriched quivers $Q$ and $a,b\in S$. The functor $f^{*}$ has a left-adjoint which we denote by $f_{!}: \mathcal{W}\Quiv_{S}\rightarrow \mathcal{W}\Quiv_{T}$. As $f^{*}$ is lax monoidal, $f_{!}$ comes equipped with an induced colax monoidal structure.

Finally, we turn to templicial objects. As was explained in Remark 2.6 of \cite{lowen2024enriched}, their category $\ts\mathcal{V}$ can be described by a Grothendieck construction as follows. Let $\underline{\Cat}$ denote the (very large) $2$-category of (large) categories, functors and natural transformations. We have a pseudofunctor
\begin{equation}\label{pseudofunctor}
\Phi = \Colax(\fint^{op}, (-)_{!}): \Set\rightarrow \underline{\Cat}: S\mapsto \Colax(\fint^{op},\mathcal{V}\Quiv_{S})
\end{equation}
which sends a map of sets $f: S\rightarrow T$ to the functor $f_{!}\circ -: \Colax(\fint^{op},\mathcal{V}\Quiv_{S})\rightarrow \Colax(\fint^{op},\mathcal{V}\Quiv_{T})$ which post-composes with the colax monoidal functor $f_{!}$ from above. The Grothendieck construction $\int\Phi$ is then the category of pairs $(X,S)$ with $S$ a set and $X: \fint^{op}\rightarrow \mathcal{V}\Quiv_{S}$ a colax monoidal functor.

\begin{Def}\label{definition: templicial obj.}
We call an object $(X,S)\in \int\Phi$ a \emph{tensor-simplicial} or \emph{templicial object} in $\mathcal{V}$ if $X$ is \emph{strongly unital}, i.e. if the counit $\epsilon: X_{0}\rightarrow I_{S}$ is an isomorphism. We denote the full subcategory of $\int\Phi$ spanned by all templicial objects by $\ts\mathcal{V}$.
\end{Def}

For more details on templicial objects, we refer to \cite{lowen2023frobenius}\cite{lowen2024enriched}.

\subsection{Decomposing monoidal categories}\label{subsection: Decomposing monoidal categories}

\begin{Def}\label{definition: decomposing equalizer}
Let $\mathcal{C}$ be a category with coproducts. Let $S$ be a set and $A\in \mathcal{C}$. We denote $\iota_{j}: A\rightarrow \coprod_{j\in S}A$ for the $j$th coprojection and $\nabla: \coprod_{i\in S}A\rightarrow A$ for the codiagonal. We call a morphism $f: A\rightarrow \coprod_{i\in S}A$ \emph{decomposing} if
\begin{equation}\label{equation: decomposing morphism}
\left(\coprod_{i\in S}f\right)f = \left(\coprod_{i\in S}\iota_{i}\right)f\quad \text{and}\quad \nabla f = \id_{A}
\end{equation}
A \emph{decomposing equalizer} is the equalizer of a decomposing morphism with a coprojection $\iota_{j}$ for some $j\in S$.
\end{Def}

\begin{Ex}\label{example: coprojection is decomposing}
Any coprojection $\iota_{j}: A\rightarrow \coprod_{i}A$ is itself decomposing.
\end{Ex}

\begin{Rem}\label{remark: decomp. eq. are coreflexive}
Note that because of the condition $\nabla f = \id_{A}$, a decomposing equalizer is always coreflexive.
\end{Rem}

\begin{Lem}\label{lemma: combined decomposing equalizer is split}
Let $\mathcal{C}$ be a category with coproducts and consider a decomposing morphism $f: A\rightarrow \coprod_{i\in S}A$. Then
\[\begin{tikzcd}
	A & {\underset{i\in S}{\coprod}A} & {\underset{i\in S}{\coprod}\underset{j\in S}{\coprod}A}
	\arrow["{\coprod_{i}f}", shift left=1, from=1-2, to=1-3]
	\arrow["{\coprod_{i}\iota_{i}}"', shift right=1, from=1-2, to=1-3]
	\arrow["f", from=1-1, to=1-2]
\end{tikzcd}\]
is a split equalizer.
\end{Lem}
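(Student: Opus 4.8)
The plan is to exhibit explicit splitting data and to verify the defining identities of a split equalizer, reading off the nontrivial ones directly from the two decomposing conditions \eqref{equation: decomposing morphism}. Recall that a fork $A \xrightarrow{e} B \rightrightarrows C$, with parallel pair $u,v\colon B\to C$ satisfying $ue=ve$, is a \emph{split equalizer} if there exist $s\colon B\to A$ and $t\colon C\to B$ with $se = \id_A$, $tu = \id_B$ and $es = tv$. Here I set $e = f$, take $u = \coprod_{i}\iota_i$ and $v = \coprod_i f$ for the parallel pair, and propose both splitting maps to be codiagonals: $s = \nabla\colon \coprod_{i\in S}A\to A$ and $t = \nabla\colon \coprod_{i\in S}\coprod_{j\in S}A \to \coprod_{j\in S}A$, where the latter folds the \emph{outer} coproduct (the one indexed by $i$).

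First I would record the behaviour of the relevant maps on coprojections. Writing $\iota_{i,j}$ for the coprojection of the $(i,j)$-summand of $\coprod_i\coprod_j A$, one has $(\coprod_i\iota_i)\iota_i = \iota_{i,i}$ and $(\coprod_i f)\iota_i = \kappa_i\circ f$, where $\kappa_i\colon \coprod_j A\to \coprod_i\coprod_j A$ is the inclusion of the $i$-th block, while the outer codiagonal satisfies $t\,\iota_{i,j} = \iota_j$. With these identities the four equations become routine. The fork condition $uf = vf$ is literally the first decomposing identity $(\coprod_i f)f = (\coprod_i \iota_i)f$, and $se = \id_A$ is exactly the second condition $\nabla f = \id_A$. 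For $tu = \id_B$ one computes $t(\coprod_i\iota_i)\iota_i = t\iota_{i,i} = \iota_i$, so $t(\coprod_i\iota_i)$ agrees with the identity on each summand. Finally, for $es = tv$ I would precompose with each $\iota_i$: the right-hand side gives $t(\coprod_i f)\iota_i = t\,\kappa_i\,f = f$, because $t\,\kappa_i\colon \coprod_j A\to\coprod_j A$ has $j$-th component $t\iota_{i,j}=\iota_j$ and is therefore $\id_{\coprod_j A}$, while the left-hand side gives $f\nabla\iota_i = f$; both match.

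The main obstacle is choosing the correct $t$: of the two evident codiagonals out of $\coprod_i\coprod_j A$, only the one folding the outer index $i$ works, whereas folding the inner index (i.e. $\coprod_i\nabla$) satisfies $tu=\id_B$ but fails $es = tv$. Equally important is respecting the asymmetry of the two parallel arrows in the split-equalizer axioms: the coprojection map $\coprod_i\iota_i$ must be placed in the slot governed by $tu = \id_B$, and the ``multiplication'' map $\coprod_i f$ in the slot governed by $es = tv$. Once these choices are made, no hypothesis on $\mathcal{C}$ beyond the existence of coproducts is required, and I would close by remarking that split equalizers are absolute, so that the equalizer is preserved by every functor --- which is presumably the point of isolating this lemma.
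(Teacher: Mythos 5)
Your proof is correct and is essentially the paper's own argument: you take the same splitting data, namely $s=\nabla$ and $t=\overline{\nabla}$ the codiagonal collapsing the \emph{outer} coproduct, and verify exactly the identities $\nabla f = \id_A$, $\overline{\nabla}\,(\coprod_i\iota_i)=\id$, and $\overline{\nabla}\,(\coprod_i f)=f\nabla$ that the paper records (with the fork condition being the first decomposing identity). Your extra componentwise checks on coprojections and the remark that the inner codiagonal would fail $es=tv$ are sound but only make explicit what the paper leaves as immediate.
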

\begin{proof}
Let $\overline{\nabla}: \coprod_{i}\coprod_{j}A\rightarrow \coprod_{j}A$ denote the codiagonal which collapses the outer coproduct. Then it immediately follows that $\overline{\nabla}\coprod_{i}f = f\nabla$ and $\overline{\nabla}\coprod_{i}\iota_{i} = \id$. By hypothesis, we also have $\nabla f = \id_{A}$.
\end{proof}

Recall that a coproduct $\coprod_{i\in I}A_{i}$ of objects in $\mathcal{V}$ is called \emph{disjoint} if all coprojections $\iota_{j}: A_{j}\rightarrow \coprod_{i\in I}A_{i}$ are monomorphisms and the intersection of $A_{i}$ and $A_{j}$ is initial whenever $i\neq j$.

\begin{Def}\label{definition: decomposing monoidal category}
We call $\mathcal{V}$ \emph{decomposing} if it satisfies the following conditions:
\begin{enumerate}[(a)]
\item coproducts commute with decomposing equalizers in $\mathcal{V}$,
\item coproducts are disjoint in $\mathcal{V}$, 
\item the monoidal product $-\otimes -$ of $\mathcal{V}$ preserves decomposing equalizers in each variable.
\end{enumerate}
\end{Def}

\begin{Ex}
In a cartesian category $\mathcal{V}$, the product $-\times -$ commutes with all equalizers. So if we assume that coproducts are disjoint and commute with equalizers, then $\mathcal{V}$ is decomposing.

This is the case for any Grothendieck topos and the categories $\mathrm{Top}$ of topological spaces, $\Cat$ of small categories and $\mathrm{Poset}$ of posets for example.
\end{Ex}

\begin{Lem}\label{lemma: decomposing equalizer in additive category is split}
If $\mathcal{C}$ is an additive category, any decomposing equalizer in $\mathcal{C}$ is split.
\end{Lem}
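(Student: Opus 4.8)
The plan is to produce explicit split-equalizer data for the equalizer $e\colon E\to A$ of a decomposing morphism $f\colon A\to\coprod_{i\in S}A$ with a coprojection $\iota_j$. The one feature of additivity I will exploit is that, although $\coprod_{i\in S}A$ need not be a product when $S$ is infinite, the universal property of the coproduct still furnishes ``projections'': for each $j\in S$ there is a unique $\pi_j\colon\coprod_{i\in S}A\to A$ with $\pi_j\iota_i=\id_A$ when $i=j$ and $\pi_j\iota_i=0$ otherwise (using the zero morphisms available in $\mathcal{C}$). I will take $\tau:=\pi_j$ as one half of the splitting and build the required retraction $\sigma\colon A\to E$ out of the endomorphism $p:=\pi_j f\colon A\to A$.

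The heart of the argument is to show that $p$, regarded as a morphism $A\to A$, is itself a fork for the pair $(f,\iota_j)$, i.e. that $fp=\iota_j p$. For finite $S$ this is immediate by comparing components, but for infinite $S$ the $\pi_i$ need not be jointly monic, so a componentwise comparison is illegitimate; this is the step I expect to be the main obstacle. To get around it I will never leave the coproducts. I define an auxiliary morphism $\Pi\colon\coprod_{i\in S}\coprod_{l\in S}A\to\coprod_{l\in S}A$ on coprojections by $\Pi\iota_{(i,l)}=\iota_l$ when $i=j$ and $\Pi\iota_{(i,l)}=0$ otherwise, where $\iota_{(i,l)}$ is the coprojection of the $(i,l)$-summand. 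A direct check from the coproduct universal property then gives $\Pi\bigl(\coprod_{i}f\bigr)=f\pi_j$ and $\Pi\bigl(\coprod_{i}\iota_i\bigr)=\iota_j\pi_j$. Applying $\Pi$ to the defining identity $\bigl(\coprod_{i}f\bigr)f=\bigl(\coprod_{i}\iota_i\bigr)f$ of a decomposing morphism yields exactly $fp=\iota_j p$, with no joint-monicity needed.

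Once $fp=\iota_j p$ is in hand, the universal property of the equalizer $e$ produces a unique $\sigma\colon A\to E$ with $e\sigma=p$. Post-composing the fork relation $fe=\iota_j e$ with $\pi_j$ gives $pe=\pi_j fe=\pi_j\iota_j e=e$, so $e\sigma e=pe=e=e\,\id_E$, and cancelling the monomorphism $e$ (equalizers are monic) yields $\sigma e=\id_E$. Finally $\tau\iota_j=\pi_j\iota_j=\id_A$ and $\tau f=\pi_j f=p=e\sigma$, so together with the given fork $fe=\iota_j e$ the triple $(e,\sigma,\tau)$ satisfies all of the split-equalizer axioms and the decomposing equalizer is split. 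Note that only the comultiplication-type identity for $f$ is used; the counit identity $\nabla f=\id_A$ plays no role.
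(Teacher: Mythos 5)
Your proof is correct and follows essentially the same route as the paper: the paper also defines the $j$th projection (your $\pi_j$), an auxiliary projection $p'\colon\bigoplus_{i,k}A\to\bigoplus_k A$ onto the $i=j$ component (your $\Pi$), applies it to the decomposing identity to get $f(\pi_jf)=\iota_j(\pi_jf)$, factors $\pi_jf$ through the equalizer, and cancels the monomorphism $e$. Your extra care about the failure of joint monicity of the projections for infinite $S$, and your observation that the counit identity $\nabla f=\id_A$ is never used, are both implicit in the paper's argument as well.
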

\begin{proof}
Let $f: A\rightarrow \bigoplus_{i\in S}A$ be a decomposing morphism in $\mathcal{C}$ and fix $j\in S$. Consider the equalizer $e: E\rightarrow A$ of $f$ and $\iota_{j}$. Then for the $j$th projection $p: \bigoplus_{i\in S}A\rightarrow A$ we have $p\iota_{j} = \id_{A}$ and
$$fpf = p'\left(\bigoplus_{i\in S}f\right)f = p'\left(\bigoplus_{i\in S}\iota_{i}\right)f = \iota_{j}pf$$
where $p': \bigoplus_{i,k}A\rightarrow \bigoplus_{k}A$ is the projection onto the component $i = j$. So there exists a unique $s: A\rightarrow E$ such that $es = pf$. Then, $ese = pfe = p\iota_{j}e = e$ and thus $se = \id_{E}$ because $e$ is a monomorphism.
\end{proof}

\begin{Prop}\label{proposition: preadditive implies decomposing}
If $\mathcal{V}$ is additive, then $\mathcal{V}$ is decomposing.
\end{Prop}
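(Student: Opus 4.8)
The plan is to derive all three conditions of Definition \ref{definition: decomposing monoidal category} from the single observation recorded in Lemma \ref{lemma: decomposing equalizer in additive category is split}, namely that in an additive category every decomposing equalizer is split. The crucial feature of a split equalizer is that it is \emph{absolute}: the equalizer is witnessed by equations between morphisms (the splitting data), and since these equations are preserved by every functor, a split equalizer is carried to a genuine equalizer by any functor whatsoever. Granting this, conditions (a) and (c) become purely formal, and only the disjointness condition (b) will need a separate short computation with the zero object.

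For condition (c), fix $A, B \in \mathcal{V}$. The functors $A \otimes (-)$ and $(-) \otimes B$ are in particular functors, so by absoluteness they send the split equalizer underlying a decomposing equalizer to an equalizer; hence $-\otimes-$ preserves decomposing equalizers in each variable. For condition (a), I read "coproducts commute with decomposing equalizers" as the statement that the coproduct functor $\coprod \colon \mathcal{V}^{\Lambda} \to \mathcal{V}$, for any index set $\Lambda$, preserves decomposing equalizers computed pointwise. Given a family of decomposing equalizer diagrams indexed by $\lambda \in \Lambda$, Lemma \ref{lemma: decomposing equalizer in additive category is split} makes each one split; as limits and the relevant splitting data in the product category $\mathcal{V}^{\Lambda}$ are computed pointwise, the family assembles into a single split equalizer in $\mathcal{V}^{\Lambda}$. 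Applying $\coprod$ and invoking absoluteness once more produces the desired equalizer in $\mathcal{V}$.

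Finally, for condition (b) I would argue directly from the zero object. Since $\mathcal{V}$ is additive it is pointed, so for any family $(A_i)_i$ there are projections $p_i \colon \coprod_i A_i \to A_i$ determined by $p_i \iota_j = \delta_{ij}\,\id$; in particular each coprojection $\iota_j$ is a split monomorphism, hence a monomorphism. It then remains to check that for $i \neq j$ the pullback of $\iota_i$ and $\iota_j$ is the zero object $0$, which is initial. Given any cone $u \colon T \to A_i$ and $v \colon T \to A_j$ with $\iota_i u = \iota_j v$, applying $p_i$ yields $u = p_i \iota_i u = p_i \iota_j v = 0$, and symmetrically $v = 0$, so the cone factors uniquely through $0$. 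Since this uses only the projections $p_i$, which exist for arbitrary (not merely finite) coproducts in a pointed additive category, the argument covers coproducts over any set. The proof is essentially formal and presents no serious obstacle; the only points demanding care are fixing the precise meaning of condition (a) and making sure that absoluteness is applied to the coproduct functor on the product category $\mathcal{V}^{\Lambda}$ rather than to a single diagram in $\mathcal{V}$.
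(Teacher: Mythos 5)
Your proof is correct and follows essentially the same route as the paper: both derive conditions (a) and (c) from Lemma \ref{lemma: decomposing equalizer in additive category is split} together with the fact that split equalizers are absolute (preserved by every functor, in particular by $\coprod\colon \mathcal{V}^{S}\to\mathcal{V}$ and by $-\otimes-$), and both settle (b) by disjointness of coproducts in additive categories. The only difference is that the paper cites disjointness as a standard fact about $\Ab$-enriched categories, whereas you verify it explicitly with the projections $p_{i}$ --- a correct but inessential elaboration.
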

\begin{proof}
By Lemma \ref{lemma: decomposing equalizer in additive category is split}, decomposing equalizers in $\mathcal{V}$ are split equalizers and are thus preserved by all functors. In particular, both the coproduct functor $\mathcal{V}^{S}\rightarrow \mathcal{V}$ and the monoidal product $-\otimes -$ preserve decomposing equalizers. Further, in an $\Ab$-enriched category, coproducts are always disjoint.
\end{proof}

\begin{Prop}\label{proposition: stability of decomposing under constructions}
Assume $\mathcal{V}$ is decomposing.
\begin{enumerate}[1.]
\item For any monoid $M$ in $\mathcal{V}$, the overcategory $\mathcal{V}/M$ with the induced monoidal structure is decomposing.
\item The category $\Comon(\mathcal{V})$ of comonoids in $\mathcal{V}$ with its induced monoidal structure is decomposing.
\end{enumerate}
\end{Prop}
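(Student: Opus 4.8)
The plan is to treat both parts uniformly by exploiting the faithful, strict monoidal forgetful functor $U$ to $\mathcal{V}$ (namely $U: \mathcal{V}/M\to \mathcal{V}$ in part (1) and $U: \Comon(\mathcal{V})\to \mathcal{V}$ in part (2)). In each case I would verify the three conditions of Definition \ref{definition: decomposing monoidal category} by showing that $U$ \emph{creates} the coproducts and the decomposing equalizers involved, and (being strict monoidal) identifies the induced monoidal product with $\otimes$; then coproducts, decomposing equalizers and the tensor in the ambient category are all computed in $\mathcal{V}$, so conditions (a) and (c) descend from commutation resp. preservation in $\mathcal{V}$, and (b) from disjointness in $\mathcal{V}$. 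The entire content thus sits in the creation statements. Note also that, since the coreflexion of a decomposing equalizer $\mathrm{eq}(f,\iota_j)$ is the codiagonal $\nabla$ (the decomposing condition $\nabla f=\id$ is exactly what is needed), these equalizers are coreflexive as recorded in Remark \ref{remark: decomp. eq. are coreflexive}.

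Part (1) is the easy case. The projection $U: \mathcal{V}/M\to \mathcal{V}$ creates all colimits and all connected limits; in particular it creates coproducts and, the parallel-pair shape being connected, equalizers, hence decomposing equalizers (a decomposing morphism over $M$ has decomposing underlying morphism, as coprojections and codiagonals are created). It is strict monoidal since $U\big((A\to M)\otimes_M(B\to M)\big)=A\otimes B$. Consequently I would read off (a) from commutation in $\mathcal{V}$, (c) from preservation in $\mathcal{V}$, and (b) from the facts that $U$ reflects monomorphisms (being faithful), creates the pullbacks witnessing disjointness (connected limits), and preserves the initial object (a colimit).

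Part (2) is the substantial case, and the main obstacle is to show that $U: \Comon(\mathcal{V})\to \mathcal{V}$ creates decomposing equalizers. That $U$ creates colimits, in particular coproducts and the initial comonoid, is standard once $\mathcal{V}$ is monoidal closed, as then $\otimes$ is cocontinuous in each variable. For equalizers I would use coreflexivity together with condition (c) for $\mathcal{V}$. Given a decomposing comonoid morphism $f: C\to \coprod_i C$ and a coprojection $\iota_j$, let $e: E\to C$ be their equalizer in $\mathcal{V}$. The key (and delicate) lemma is that the tensor of two coreflexive equalizers is again a coreflexive equalizer whenever $\otimes$ preserves coreflexive equalizers variablewise; its proof factors a test map first through $e\otimes C$ and then through $E\otimes e$, using the coreflexions $\nabla$ to see that the relevant one-variable equalizing conditions hold and that $e\otimes C$ is monic. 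Applied here it yields that $e\otimes e: E\otimes E\to C\otimes C$ is the equalizer of $f\otimes f$ and $\iota_j\otimes \iota_j$, and likewise for the threefold tensor. Since $f,\iota_j$ are comonoid morphisms and $fe=\iota_j e$, the composite $\Delta_C e$ equalizes $(f\otimes f,\iota_j\otimes \iota_j)$, hence factors uniquely through $e\otimes e$ to give a comultiplication $\Delta_E$; with $\epsilon_E=\epsilon_C e$ this makes $E$ a comonoid, the coassociativity and counit axioms being inherited from $C$ because $e$, $e\otimes e$ and $e^{\otimes 3}$ are monic. One then checks $e$ is the equalizer in $\Comon(\mathcal{V})$, so $U$ creates it.

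Finally, disjointness (b) for $\Comon(\mathcal{V})$ does not follow formally from creation of limits, since $U$ need not create the relevant pullbacks, so I would argue it by hand. Coprojections are monic because $U$ reflects monomorphisms. For the intersection, given comonoid maps $p: D\to C_i$ and $q: D\to C_j$ with $\iota_i p=\iota_j q$ and $i\neq j$, disjointness in $\mathcal{V}$ forces $p$ to factor through the initial object; counitality of $p$ then shows $\epsilon_D$ factors through $0$, whence $\epsilon_D\otimes \id_D$ factors through $0\otimes D\cong 0$ (using that $\otimes$ is cocontinuous), and the counit axiom $(\epsilon_D\otimes \id_D)\Delta_D=\id_D$ exhibits $\id_D$ as factoring through the initial object, so $D\cong 0$. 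Thus the intersection is the initial comonoid, giving (b). With coproducts, decomposing equalizers and the tensor all created or preserved by the strict monoidal $U$, conditions (a) and (c) for $\Comon(\mathcal{V})$ descend from $\mathcal{V}$ exactly as in part (1). The one genuinely delicate point throughout is the coreflexive-equalizer lemma underlying the creation of decomposing equalizers in $\Comon(\mathcal{V})$.
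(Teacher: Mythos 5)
Your proof is correct and follows the same route as the paper's (very terse) proof: both parts reduce to the forgetful functor to $\mathcal{V}$ being strong monoidal and creating colimits and decomposing equalizers, the latter deduced for $\Comon(\mathcal{V})$ from preservation of decomposing equalizers by $-\otimes-$. Two of your ingredients deserve mention because the paper leaves them implicit. First, your key lemma --- variablewise preservation of coreflexive equalizers implies joint preservation --- is exactly the fact the paper invokes, but only later, in the proof of Proposition \ref{proposition: decomposition of Fts-object}.3; the paper's proof of the present proposition does not spell it out, even though it is what makes ``creation of decomposing equalizers'' work (one needs $e\otimes e$ and $e^{\otimes 3}$ to be equalizers in order to transport the comultiplication and check coassociativity). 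Second, your observation that disjointness of coproducts in $\Comon(\mathcal{V})$ does \emph{not} follow formally from the creation statements (the forgetful functor from comonoids does not create the relevant pullbacks) is correct, and the paper's proof passes over this point in silence; your counit argument --- any comonoid $D$ mapping compatibly into two distinct summands has $\id_D$ factoring through $0\otimes D\cong 0$, hence $D\cong 0$ --- supplies precisely the missing verification of condition (b).
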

\begin{proof}
\begin{enumerate}[1.]
\item This immediately follows from the fact that the forgetful functor $\mathcal{V}/M\rightarrow \mathcal{V}$ is strong monoidal and creates colimits and equalizers.
\item The forgetful functor $\Comon(\mathcal{V})\rightarrow \mathcal{V}$ is strong monoidal and creates colimits. It will suffice that it also creates decomposing equalizers. But the latter follows from the fact that $-\otimes -$ preserves decomposing equalizers in $\mathcal{V}$.
\end{enumerate}
\end{proof}

\subsection{Comparison}\label{subsection: Comparison}

We now describe a comparison functor from templicial objects to based colax monoidal functors and show that it is an equivalence when $\mathcal{V}$ is a decomposing monoidal category.

\begin{Con}\label{construction: comparison functor}
Consider the natural transformation $t: \id_{\Set}\rightarrow *$ given by the terminal map $t_{S}: S\rightarrow \{*\}$ for every set $S$. This induces a pseudonatural transformation
$$
\Phi t: \Phi\rightarrow \Phi\circ *
$$
between pseudofunctors $\Set\rightarrow \Cat$, where $\Phi = \Colax(\fint^{op},(-)_{!})$ is as in \eqref{pseudofunctor}. Through the Grothendieck construction, we obtain a functor
$$
\mathfrak{c} : \int\Phi\rightarrow \int\Phi\circ *\simeq \Colax(\fint^{op},\mathcal{V})\times \Set
$$

Explicitly, this functor sends a pair $(X,S)$ with $S$ a set and $X: \fint^{op}\rightarrow \mathcal{V}\Quiv_{S}$ colax monoidal to the pair $(\mathfrak{c}X,S)$, where
$$
\mathfrak{c}X_{n} = (t_{S})_{!}(X_{n}) = \coprod_{a,b\in S}X_{n}(a,b)
$$
for all $n\geq 0$. The comultiplication and counit are induced by those of $X$. Moreover, a templicial morphism $(\alpha,f): (X,S)\rightarrow (Y,T)$ is sent to the pair $(\mathfrak{c}\alpha, f)$, where for every $n\geq 0$,
$$
\mathfrak{c}\alpha_{n}: \coprod_{a,b\in S}X_{n}(a,b)\rightarrow \coprod_{x,y\in T}Y_{n}(x,y)
$$
is induced by $(\alpha_{n})_{a,b}: X_{n}(a,b)\rightarrow Y_{n}(f(a),f(b))$ for all $a,b\in S$.
\end{Con}

Note that, up to equivalence, we may consider $\Colax_{b}(\fint^{op},\mathcal{V})$ as a subcategory of $\Colax(\fint^{op},\mathcal{V})\times \Set$.

\begin{Prop}
The functor $\mathfrak{c}: \int\Phi\rightarrow \Colax(\fint^{op},\mathcal{V})\times \Set$ of Construction \ref{construction: comparison functor} restricts to a functor
$$
\mathfrak{c}: \ts\mathcal{V}\rightarrow \Colax_{b}(\fint^{op},\mathcal{V})
$$
\end{Prop}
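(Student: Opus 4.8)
The claim is that $\mathfrak{c}$ carries a templicial object $(X,S)$ to a based colax monoidal functor and a templicial morphism to a morphism in $\Colax_b(\fint^{op},\mathcal{V})$. By Construction \ref{construction: comparison functor}, $\mathfrak{c}X = (t_S)_!\circ X$ is already a colax monoidal functor $\fint^{op}\to\mathcal{V}$ carrying the set $S$, so the only additional datum to produce for an object is an isomorphism of comonoids $\varphi\colon(\mathfrak{c}X)_0\xrightarrow{\sim}F(S)$; for a morphism $(\alpha,f)$ the only thing left to verify is that $(\mathfrak{c}\alpha)_0$ is induced by $f$ through these identifications.

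The plan is to recognize the comonoid $(\mathfrak{c}X)_0$ as the image of $X_0$ under a functor on comonoids. Since $(t_S)_!$ is colax monoidal, it induces a functor $\Comon((t_S)_!)\colon\Comon(\mathcal{V}\Quiv_S)\to\Comon(\mathcal{V})$, and because the colax structure of the composite $(t_S)_!\circ X$ is obtained by pasting the colax structure of $(t_S)_!$ onto that of $X$, evaluation at $0$ gives a commuting square.
\[\begin{tikzcd}
	{\Colax(\fint^{op},\mathcal{V}\Quiv_S)} & {\Colax(\fint^{op},\mathcal{V})} \\
	{\Comon(\mathcal{V}\Quiv_S)} & {\Comon(\mathcal{V})}
	\arrow["{(t_S)_!\circ -}", from=1-1, to=1-2]
	\arrow["{(-)_0}"', from=1-1, to=2-1]
	\arrow["{(-)_0}", from=1-2, to=2-2]
	\arrow["{\Comon((t_S)_!)}"', from=2-1, to=2-2]
\end{tikzcd}\]
In particular $(\mathfrak{c}X)_0\cong\Comon((t_S)_!)(X_0)$ as comonoids. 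Now I would invoke strong unitality: the counit $\epsilon\colon X_0\to I_S$ is an isomorphism, and any comonoid whose counit is invertible is canonically isomorphic, via its counit, to the unit object with its canonical comonoid structure. Hence $\epsilon$ is an isomorphism of comonoids $X_0\xrightarrow{\sim}I_S$, which $\Comon((t_S)_!)$ turns into an isomorphism $(\mathfrak{c}X)_0\cong(t_S)_!(I_S)$.

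It then remains to identify $(t_S)_!(I_S)$, with the comonoid structure induced by the colax structure of $(t_S)_!$, with $F(S)$ equipped with its free comonoid structure. On underlying objects this is immediate, since $(t_S)_!(I_S)=\coprod_{a,b\in S}I_S(a,b)\cong\coprod_{a\in S}I=F(S)$ because $I_S(a,b)=0$ for $a\neq b$. For the comonoid structure one computes the colax constraints of $(t_S)_!$ as the mates of the lax constraints of $(t_S)^*$: the lax unit and multiplication of $(t_S)^*$ are the evident unit-insertions and componentwise codiagonals, and their mates exhibit the colax counit $(t_S)_!(I_S)\to I$ as the codiagonal $\nabla\colon F(S)\to I$ and the colax comultiplication as the diagonal coprojection $F(S)\to F(S)\otimes F(S)$ onto the components indexed by $(a,a)$. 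These are precisely the counit and comultiplication of the free comonoid $F(S)$ of Remark \ref{remark: comonoids in a cartesian category}, yielding the desired isomorphism of comonoids $\varphi\colon(\mathfrak{c}X)_0\xrightarrow{\sim}F(S)$, so that $(\mathfrak{c}X,S,\varphi)$ is a based colax monoidal functor.

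For a templicial morphism $(\alpha,f)\colon(X,S)\to(Y,T)$, the underlying map of sets of $\mathfrak{c}(\alpha,f)$ is again $f$ and $\mathfrak{c}\alpha$ is a monoidal natural transformation by Construction \ref{construction: comparison functor}, so one need only check that $(\mathfrak{c}\alpha)_0$ agrees with $F(f)$ through $\varphi_X$ and $\varphi_Y$. This I would obtain by applying $(-)_0$ to $\mathfrak{c}\alpha$ and chasing the naturality of the square above together with the explicit description $(\alpha_0)_{a,b}\colon X_0(a,b)\to Y_0(f(a),f(b))$, which under the unit isomorphisms $X_0\cong I_S$ and $Y_0\cong I_T$ collapses to the coprojection-relabelling that defines $F(f)$. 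I expect the main obstacle to be the computation in the preceding paragraph: taking the mates of the lax constraints of $(t_S)^*$ with enough care to confirm that the comonoid on $(t_S)_!(I_S)$ is \emph{exactly} the free one, and not merely isomorphic to $F(S)$ as a plain object; the remaining verifications are formal bookkeeping in the Grothendieck construction.
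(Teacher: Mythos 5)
Your proof is correct and follows essentially the same route as the paper's: the paper likewise defines the comonoid isomorphism $\varphi_{(X,S)}$ as the map $(t_S)_!(X_0)\to (t_S)_!(I_S)\simeq F(S)$ induced by the counit $\epsilon$, notes it is a comonoid morphism natural in $(X,S)$, and invokes strong unitality to conclude it is invertible. The only difference is that you spell out the details (the $\Comon((t_S)_!)$ square, the mate computation identifying $(t_S)_!(I_S)$ with the free comonoid, and the naturality chase for morphisms) which the paper compresses into ``it easily follows.''
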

\begin{proof}
Note that for any set $S$, $(t_{S})_{!}(I_{S})\simeq \coprod_{x\in S}I = F(S)$. Take an object $(X,S)$ of $\int\Phi$, then the counit $\epsilon: X_{0}\rightarrow I_{S}$ induces a morphism
\begin{displaymath}
\varphi_{(X,S)}: \mathfrak{c}X_{0} = (t_{S})_{!}(X_{0})\rightarrow F(S)
\end{displaymath}
in $\mathcal{V}$. It easily follows that $\varphi_{(X,S)}$ is a comonoid morphism which is natural in $(X,S)$. Moreover, if $(X,S)$ is a templicial object, then $\epsilon$ and thus $\varphi_{(X,S)}$ is an isomorphism.
\end{proof}

We now describe how to invert the comparison functor $\mathfrak{c}: \ts\mathcal{V}\rightarrow \Colax_{b}(\fint^{op},\mathcal{V})$. For this we need to ``decompose'' the objects $X_{n}\in\mathcal{V}$ of a based colax monoidal functor to form a quiver. This goes as follows.

\begin{Con}\label{construction: decomposing equalizer of cc-functors}
Let $X: \fint^{op}\rightarrow \mathcal{V}$ be a based colax monoidal functor with comultiplication $\mu$ and base $S$. Via the isomorphism $X_{0}\simeq F(S)\simeq \coprod_{a\in S}I$, we have for every $n\geq 0$, a morphism
$$
\mu_{0,n,0}: X_{n}\rightarrow X_{0}\otimes X_{n}\otimes X_{0}\simeq \coprod_{a,b\in S}X_{n}
$$
which assemble into a natural transformation $\mu_{0,-,0}: X\rightarrow \coprod_{a,b\in S}X$.

Then define $X(a,b)$ as the equalizer
\[\begin{tikzcd}
	{X(a,b)} & X & {\underset{a,b\in S}{\coprod}X}
	\arrow["{\mu_{0,-,0}}", shift left=1, from=1-2, to=1-3]
	\arrow["{c_{a,b}}"', shift right=1, from=1-2, to=1-3]
	\arrow["{e_{a,b}}", from=1-1, to=1-2]
\end{tikzcd}\]
in $\Fun(\fint^{op},\mathcal{V})$, where $c_{a,b}$ is the $(a,b)$th coprojection. This is in fact a decomposing equalizer in the sense of Definition \ref{definition: decomposing equalizer}, as shown by the following lemma.
\end{Con}

\begin{Lem}\label{lemma: decomposing equalizers of cc-functors}
Let $X$ be a based colax monoidal functor with comultiplication $\mu$ and base $S$. Then $\mu_{0,-,0}: X\rightarrow \coprod_{a,b\in S}X$ is a decomposing morphism of $\Fun(\fint^{op},\mathcal{V})$.
\end{Lem}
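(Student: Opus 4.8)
The plan is to reduce the two defining conditions of a decomposing morphism (Definition \ref{definition: decomposing equalizer}, now with index set $S\times S$) to the coassociativity and counit axioms of the colax monoidal functor $X$, read off through the comonoid isomorphism $\varphi\colon X_{0}\xrightarrow{\sim}F(S)=\coprod_{a\in S}I$. Since $\mathcal{V}$ is closed, $-\otimes-$ preserves coproducts in each variable, and both coproducts and the tensor product in $\Fun(\fint^{op},\mathcal{V})$ are computed pointwise; hence it suffices to check everything levelwise, and I may freely use the natural isomorphisms $\coprod_{a,b\in S}Y\simeq X_{0}\otimes Y\otimes X_{0}$ for $Y\in\mathcal{V}$. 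Under these, writing $u_{a}\colon I\to X_{0}$ for the coprojections of $F(S)$, the coprojection $\iota_{(a,b)}$ becomes $u_{a}\otimes\id\otimes u_{b}$, the map $\mu_{0,-,0}\colon X\to\coprod_{a,b}X$ is precisely the ternary comultiplication $X_{n}\to X_{0}\otimes X_{n}\otimes X_{0}$, and $\coprod_{a,b}\mu_{0,-,0}$ corresponds to $\id\otimes\mu_{0,-,0}\otimes\id$.

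First I would record the two remaining translations. The codiagonal $\nabla\colon\coprod_{a,b}X\to X$ corresponds to $\epsilon\otimes\id\otimes\epsilon$: because $\varphi$ is a comonoid isomorphism it carries the counit $\epsilon$ of $X_{0}$ to the codiagonal of $F(S)$, so $\epsilon u_{a}=\id_{I}$. The map $\coprod_{a,b}\iota_{(a,b)}$ sends the $(a,b)$-summand into the $\bigl((a,b),(a,b)\bigr)$-summand, i.e.\ it doubles each outer index, $u_{a}\mapsto u_{a}\otimes u_{a}$; since $\varphi$ identifies the comultiplication $\mu_{0,0}$ of $X_{0}$ with the diagonal comultiplication of $F(S)$, which does exactly $u_{a}\mapsto u_{a}\otimes u_{a}$, this map corresponds to $\mu_{0,0}\otimes\id\otimes\mu_{0,0}$.

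With these identifications the condition $\nabla\mu_{0,-,0}=\id_{X}$ becomes $(\epsilon\otimes\id\otimes\epsilon)\mu_{0,n,0}=\id_{X_{n}}$, which is the left and right counit axioms of $X$ applied in turn. The first decomposing condition becomes
$$(\id\otimes\mu_{0,n,0}\otimes\id)\,\mu_{0,n,0}=(\mu_{0,0}\otimes\id\otimes\mu_{0,0})\,\mu_{0,n,0},$$
and both sides are iterated comultiplications of $X_{n}$ landing in $X_{0}\otimes X_{0}\otimes X_{n}\otimes X_{0}\otimes X_{0}$ with the same leaf-shape $(0,0,n,0,0)$ — the left side refines the middle leaf of $\mu_{0,n,0}$ again by $\mu_{0,n,0}$, the right side refines each outer leaf by $\mu_{0,0}$. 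Hence they agree by coassociativity (coherence) of the colax monoidal structure.

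The main obstacle, and really the only point requiring care, is the bookkeeping that turns $\coprod_{a,b}\iota_{(a,b)}$ into $\mu_{0,0}\otimes\id\otimes\mu_{0,0}$: one must verify that the combinatorial doubling of indices built into the codiagonal-style map $\coprod_{i}\iota_{i}$ is exactly the image under $\varphi$ of the comultiplication of the comonoid $X_{0}$. Once this is in place, both conditions follow formally from the colax axioms, with no further computation inside $\mathcal{V}$.
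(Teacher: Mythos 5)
Your proposal is correct and follows essentially the same route as the paper: translate everything through the comonoid isomorphism $X_{0}\simeq F(S)$, under which the counit becomes the codiagonal and the morphisms $\id_{X_{0}}\otimes\mu_{0,n,0}\otimes\id_{X_{0}}$ and $\mu_{0,0}\otimes\id_{X_{n}}\otimes\mu_{0,0}$ become $\coprod_{a,b}\mu_{0,n,0}$ and $\coprod_{a,b}c_{a,b}$, so that the two decomposing conditions are exactly coassociativity and counitality of the colax structure. Your extra bookkeeping (the identification of $\coprod_{a,b}\iota_{(a,b)}$ with $\mu_{0,0}\otimes\id\otimes\mu_{0,0}$ via the diagonal comultiplication of $F(S)$) is precisely what the paper's terser proof leaves implicit, and it is carried out correctly.
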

\begin{proof}
Note that through the isomorphism $X_{0}\simeq \coprod_{a\in S}I$, the counit $\epsilon: X_{0}\rightarrow I$ becomes the codiagonal. Moreover, for all $n\geq 0$, the morphisms $\id_{X_{0}}\otimes \mu_{0,n,0}\otimes \id_{X_{0}}$ and $\mu_{0,0}\otimes \id_{X_{n}}\otimes \mu_{0,0}$ become $\coprod_{a,b}\mu_{0,n,0}$ and $\coprod_{a,b}c_{a,b}$ respectively. Thus the conditions \eqref{equation: decomposing morphism} for $\mu_{0,-,0}$ to be a decomposing morphism precisely translate to the coassociativity of $\mu$ and its counitality with $\epsilon$.
\end{proof}

\begin{Prop}\label{proposition: decomposition of Fts-object}
Suppose that $\mathcal{V}$ is a decomposing monoidal category. Let $X$ be a based colax monoidal functor with base $S$, comultiplication $\mu$ and counit $\epsilon$. Then:
\begin{enumerate}[1.]
\item The canonical natural transformation
$$
(e_{a,b})_{a,b}:\coprod_{a,b\in S}X(a,b)\rightarrow X
$$
is an isomorphism.

\item For all $a,b\in S$, the composition
$$
X_{0}(a,a)\xrightarrow{e_{a,a}} X_{0}\xrightarrow{\epsilon} I
$$
is an isomorphism, and $X_{0}(a,b)\simeq 0$ if $a\neq b$.
\item For all $k,l\geq 0$ and $a,b\in S$, the composite $\mu_{k,l}e_{a,b}$ factors uniquely as
$$
X_{k+l}(a,b)\xrightarrow{\mu^{a,b}_{k,l}} \coprod_{c\in S}X_{k}(a,c)\otimes X_{l}(c,b)\xrightarrow{(e_{a,c}\otimes e_{c,b})_{c}} X_{k}\otimes X_{l}
$$
\end{enumerate}
\end{Prop}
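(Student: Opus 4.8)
The plan is to handle the three parts in order, bootstrapping each from the previous one, and to concentrate essentially all of the real work in part (3).

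For part (1), I would combine Lemma \ref{lemma: combined decomposing equalizer is split} with condition (a) of Definition \ref{definition: decomposing monoidal category}. By Lemma \ref{lemma: decomposing equalizers of cc-functors}, $\mu_{0,-,0}$ is a decomposing morphism of $\Fun(\fint^{op},\mathcal{V})$, so Lemma \ref{lemma: combined decomposing equalizer is split} exhibits $X$, via the leg $\mu_{0,-,0}$, as the equalizer of the parallel pair $\coprod_{a,b}\mu_{0,-,0}$ and $\coprod_{a,b}c_{a,b}$. On the other hand, each $X(a,b)$ is by construction the equalizer of $\mu_{0,-,0}$ and $c_{a,b}$, and since coproducts commute with decomposing equalizers (condition (a), which holds pointwise in the functor category), $\coprod_{a,b}X(a,b)$ is the equalizer of the very same parallel pair, now via the leg $\coprod_{a,b}e_{a,b}$. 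Comparing the two equalizers yields a unique isomorphism $\theta\colon\coprod_{a,b}X(a,b)\xrightarrow{\sim}X$ with $\mu_{0,-,0}\,\theta=\coprod_{a,b}e_{a,b}$; postcomposing with the codiagonal $\nabla$ and invoking the second decomposing identity $\nabla\mu_{0,-,0}=\id$ identifies $\theta$ with $(e_{a,b})_{a,b}$, which is therefore an isomorphism.

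For part (2), I would evaluate at level $0$ and exploit that the base isomorphism $X_{0}\simeq F(S)=\coprod_{e\in S}I$ is one of comonoids. Under it the comultiplication $\mu_{0,0}$ becomes the diagonal $\coprod_{e}I\to\coprod_{e,f}I$ sending the $e$th summand to the $(e,e)$th, while the counit $\epsilon$ becomes the codiagonal, as already noted in the proof of Lemma \ref{lemma: decomposing equalizers of cc-functors}. Hence $\mu_{0,0,0}$ sends the $e$th copy of $I$ into the $(c,d)=(e,e)$ summand of $\coprod_{c,d}X_{0}$, whereas $c_{a,b}$ sends every copy into the $(a,b)$ summand. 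Using disjointness (condition (b)) to compute the defining equalizer summandwise, the $e$th summand survives precisely when the two maps agree there, i.e.\ when $(e,e)=(a,b)$. Thus $X_{0}(a,b)\simeq 0$ for $a\neq b$, while $X_{0}(a,a)$ is the single summand $e=a$, so that $e_{a,a}$ is the coprojection $\iota_{a}\colon I\to\coprod_{e}I=X_{0}$ and $\epsilon\,e_{a,a}=\nabla\iota_{a}=\id_{I}$.

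For part (3), I would first decompose the target: by part (1) and the cocontinuity of $\otimes$ (valid since $\mathcal{V}$ is monoidal closed), $X_{k}\otimes X_{l}\simeq\coprod_{a',c'}\coprod_{c'',b'}X_{k}(a',c')\otimes X_{l}(c'',b')$, and under this identification $(e_{a,c}\otimes e_{c,b})_{c}$ is the inclusion of the diagonal sub-coproduct indexed by $\{((a,c),(c,b))\mid c\in S\}$. Disjointness (condition (b)) makes this inclusion a monomorphism, which will furnish uniqueness, while condition (c) guarantees that tensoring the decomposing equalizers defining the $X_{k}(a,c)$ and $X_{l}(c,b)$ keeps this sub-coproduct expressible as a decomposing equalizer, through whose universal property the factoring map $\mu^{a,b}_{k,l}$ is produced. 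It then remains to check that $\mu_{k,l}\,e_{a,b}$ lands in this diagonal summand, i.e.\ that its outer indices are $(a,b)$ and its two middle indices agree. The outer indices are pinned down by the equalizer identity $\mu_{0,-,0}\,e_{a,b}=c_{a,b}\,e_{a,b}$ together with coassociativity, which relates $\mu_{0,k+l,0}$ to $\mu_{k,l}$ followed by the boundary comultiplications on the two tensor factors, forcing the first endpoint to be $a$ and the last to be $b$.

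The agreement of the two middle indices is the crux, and I expect it to be the main obstacle. I would obtain it by inserting the middle vertex through the decomposition $[k+l]=[k]+[0]+[l]$, giving $\mu_{k,0,l}\colon X_{k+l}\to X_{k}\otimes X_{0}\otimes X_{l}\simeq\coprod_{c}X_{k}\otimes X_{l}$, and then using that coassociativity expresses $\mu_{k,0,l}$ both as $(\mu_{k,0}\otimes\id)\mu_{k,l}$ and as $(\id\otimes\mu_{0,l})\mu_{k,l}$: the first reads off the right endpoint of the $X_{k}$-piece, the second the left endpoint of the $X_{l}$-piece, and their coincidence forces $c'=c''$. The delicate point is to propagate these endpoint equalities through the decomposition isomorphisms and the disjointness of coproducts rigorously enough to conclude that $\mu_{k,l}\,e_{a,b}$ genuinely factors through the diagonal inclusion; once this is established, the monomorphism from condition (b) delivers the unique $\mu^{a,b}_{k,l}$.
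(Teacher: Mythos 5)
Your overall strategy is the paper's: parts (1) and (2) proceed exactly as in the paper (two descriptions of one equalizer, via Lemma \ref{lemma: combined decomposing equalizer is split} and condition (a) of Definition \ref{definition: decomposing monoidal category}; then a level-zero computation through the comonoid isomorphism $X_{0}\simeq F(S)$), and your part (3) rests on the same key mechanism, namely inserting the middle vertex via $\mu_{k,0,l}$ and exploiting coassociativity together with the defining identity $\mu_{0,k+l,0}\,e_{a,b}=c_{a,b}\,e_{a,b}$. Two smaller inaccuracies: in part (2), computing the equalizer ``summandwise'' is not a consequence of disjointness alone but is precisely condition (a) applied to the summand equalizers (which are decomposing, each being the equalizer of two coprojections); and your claim that disjointness makes the diagonal sub-coproduct inclusion a monomorphism is not immediate --- in this setting monicity again comes from exhibiting that inclusion as an equalizer, using both (a) and (b), not from (b) by itself.

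The genuine gap is in part (3), in two places. First, condition (c) only gives preservation of decomposing equalizers by $-\otimes-$ in \emph{each variable separately}, whereas your construction tensors the two equalizers $e_{a,c}$ and $e_{c,b}$ \emph{simultaneously}. To conclude that $\coprod_{c}e_{a,c}\otimes e_{c,b}$ is again an equalizer you must use that decomposing equalizers are coreflexive (Remark \ref{remark: decomp. eq. are coreflexive}) together with the standard fact that a bifunctor preserving coreflexive equalizers in each variable preserves them in both variables jointly; the paper makes exactly this observation at the start of its proof, and without it your ``diagonal sub-coproduct is a decomposing equalizer'' step is unjustified, since condition (c) as stated does not apply. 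Second, the step you defer as ``the delicate point'' is not a loose end but the actual content of the paper's argument: after rewriting via $X_{0}\simeq\coprod_{c}I$, one must check that $\mu_{k,0,l}\,e_{a,b}$ equalizes the pair $\mu_{0,k,0}\otimes\id_{X_{0}}\otimes\mu_{0,l,0}$ and $c_{a,*}\otimes\mu_{0,0,0}\otimes c_{*,b}$, which the paper does by the chain
\begin{align*}
(\mu_{0,k,0}\otimes \id_{X_{0}}\otimes \mu_{0,l,0})\mu_{k,0,l}e_{a,b} &= (\id_{X_{0}}\otimes \mu_{k,0,0,0,l}\otimes \id_{X_{0}})\mu_{0,k+l,0}e_{a,b}\\
&= (\id_{X_{0}}\otimes \mu_{k,0,0,0,l}\otimes \id_{X_{0}})c_{a,b}e_{a,b}\\
&= (c_{a,*}\otimes \mu_{0,0,0}\otimes c_{*,b})\mu_{k,0,l}e_{a,b},
\end{align*}
after which $\mu^{a,b}_{k,l}$ exists and is unique by the universal property of the equalizer, and composing with the codiagonal (that is, with $\id_{X_{k}}\otimes\epsilon\otimes\id_{X_{l}}$, using counitality) recovers $\mu_{k,l}\,e_{a,b}$. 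You have named the right ingredients, so the gap is fillable, but as written the crux of part (3) remains unproven.
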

\begin{proof}
\begin{enumerate}[1.]
\item By Lemma \ref{lemma: decomposing equalizers of cc-functors}, $\mu_{0,-,0}$ is decomposing and thus $\coprod_{a,b\in S}X(a,b)$ is the equalizer of $\coprod_{a,b}\mu_{0,-,0}$ and $\coprod_{a,b}c_{a,b}$. Hence by Lemma \ref{lemma: combined decomposing equalizer is split}, it is isomorphic to $X$. More precisely, for the isomorphism $\varphi: \coprod_{a,b}X(a,b)\xrightarrow{\sim} X$ we have $\coprod_{a,b}e_{a,b} = \mu_{0,-,0}\varphi$ and thus as $\epsilon$ coincides with the codiagonal $\nabla: \coprod_{a}I\rightarrow I$, we get $\varphi = (e_{a,b})_{a,b}$.

\item As coproducts are disjoint we have an equalizer diagram
\[\begin{tikzcd}
	{I_{a,x,b}} & I & {\underset{y,z\in S}{\coprod}I}
	\arrow["{\iota_{x,x}}", shift left=1, from=1-2, to=1-3]
	\arrow["{\iota_{a,b}}"', shift right=1, from=1-2, to=1-3]
	\arrow["f", from=1-1, to=1-2]
\end{tikzcd}\]
where $I_{a,x,b} = I$ if $a = b = x$ and $I_{a,x,b} = 0$ otherwise. Taking the coproduct of this diagram over all $x\in S$, we find an equalizer
\[\begin{tikzcd}
	{I_{a,b}} & {\underset{x\in S}{\coprod}I} & {\underset{y,x,z\in S}{\coprod}I}
	\arrow["{\coprod_{x}\iota_{x,x}}", shift left=1, from=1-2, to=1-3]
	\arrow["{\coprod_{x}\iota_{a,b}}"', shift right=1, from=1-2, to=1-3]
	\arrow["f", from=1-1, to=1-2]
\end{tikzcd}\]
where $I_{a,b} = I$ if $a = b$ and $I_{a,b} = 0$ if $a\neq b$. Now via the isomorphism $X_{0}\simeq \coprod_{x}I$, $\mu_{0,0,0}$ becomes $\coprod_{x}\iota_{x,x}$ and thus we have an isomorphism $\varphi: X_{0}(a,b)\rightarrow I_{a,b}$ such that $\iota_{a,b}\varphi = e_{a,b}$. As $\epsilon$ coincides with the codiagonal $\nabla$, we find that $\varphi = \epsilon e_{a,b}$.

\item Note that since decomposing equalizers are coreflexive (Remark \ref{remark: decomp. eq. are coreflexive}), and they are preserved by $-\otimes -$ in each variable, they are also preserved in both variables simultaneously. It then follows from Lemma \ref{lemma: decomposing equalizers of cc-functors} that the morphism
$$
\coprod_{c\in S}X_{k}(a,c)\otimes X_{l}(c,b)\xrightarrow{\coprod_{c}e_{a,c}\otimes e_{c,b}}\coprod_{c\in S}X_{k}\otimes X_{l}
$$
is the equalizer of $\coprod_{c}\mu_{0,k,0}\otimes \mu_{0,l,0}$ and $\coprod_{c}c_{a,c}\otimes c_{c,b}$. Using the isomorphism $X_{0}\simeq\coprod_{c}I$, we see that this is equivalently the equalizer of $\mu_{0,k,0}\otimes \id_{X_{0}}\otimes \mu_{0,l,0}$ and $c_{a,*}\otimes \mu_{0,0,0}\otimes c_{*,b}$, where
$$
c_{a,*}: X_{k}\simeq I\otimes X_{k}\xrightarrow{\iota_{a}\otimes \id_{X_{k}}} \coprod_{a\in S}I\otimes X_{k}\simeq X_{0}\otimes X_{k}
$$
and similarly for $c_{*,b}$.

Now note that for the morphisms $c_{a,b}: X_{k+l}\rightarrow X_{0}\otimes X_{k+l}\otimes X_{0}$ and $e_{a,b}: X_{k+l}(a,b)\rightarrow X_{k+l}$, we have
\begin{align*}
(\mu_{0,k,0}\otimes \id_{X_{0}}\otimes \mu_{0,l,0})\mu_{k,0,l}e_{a,b} = (\id_{X_{0}}\otimes \mu_{k,0,0,0,l}\otimes \id_{X_{0}})\mu_{0,k+l,0}e_{a,b}\\
= (\id_{X_{0}}\otimes \mu_{k,0,0,0,l}\otimes \id_{X_{0}})c_{a,b}e_{a,b} = (c_{a,*}\otimes \mu_{0,0,0}\otimes c_{*,b})\mu_{k,0,l}e_{a,b}
\end{align*}
Thus there is a unique $\mu^{a,b}_{k,l}: X_{k+l}(a,b)\rightarrow \coprod_{c\in S}X_{k}(a,c)\otimes X_{l}(c,b)$ such that $(\coprod_{c}e_{a,c}\otimes e_{c,b})\mu^{a,b}_{k,l} = \mu_{k,0,l}e_{a,b}$. Composing this equality with the codiagonal $\coprod_{c}X_{k}\otimes X_{l}\rightarrow X_{k}\otimes X_{l}$, the result follows.
\end{enumerate}
\end{proof}

\begin{Con}
Assume $\mathcal{V}$ is decomposing. We construct a functor
$$
\mathfrak{d}: \Colax_{b}(\fint^{op},\mathcal{V})\rightarrow \ts\mathcal{V}
$$

Take a based colax monoidal functor $X$ of $\mathcal{V}$ with base $S$, comultiplication $\mu$ and counit $\epsilon$. By Construction \ref{construction: decomposing equalizer of cc-functors}, we have a collection of functors $X(a,b): \fint^{op}\rightarrow \mathcal{V}$ for $a,b\in S$, which we can regard as a functor
$$
\tilde{X}: \fint^{op}\rightarrow \mathcal{V}\Quiv_{S}
$$
By Proposition \ref{proposition: decomposition of Fts-object}.$2$, we have a quiver isomorphism $\tilde{\epsilon}: \tilde{X}_{0}\xrightarrow{\sim} I_{S}$, and the morphisms $\mu^{a,b}_{k,l}$ of Proposition \ref{proposition: decomposition of Fts-object}.$3$ combine to give a quiver morphism
$$
\tilde{\mu}_{k,l}: \tilde{X}_{k+l}\rightarrow \tilde{X}_{k}\otimes_{S} \tilde{X}_{l}
$$
It follows from the coassociativity and counitality of $\mu$ and $\epsilon$ that $\tilde{\mu}$ and $\tilde{\epsilon}$ define a strongly unital, colax monoidal structure on $\tilde{X}$ and thus $(\tilde{X},S)$ is a templicial object in $\mathcal{V}$.

Next, let $X$ and $Y$ be based colax monoidal functors of $\mathcal{V}$ with respective bases $S$ and $T$. Let $\alpha: X\rightarrow Y$ be a morphism of based colax monoidal functors. As $\alpha$ is a monoidal natural transformation, there exist unique morphisms $\alpha^{a,b}: X(a,b)\rightarrow Y(f(a),f(b))$ such that $e_{f(a),f(b)}\alpha^{a,b} = \alpha e_{a,b}$, for all $a,b\in S$. This defines a natural transformation $\tilde{X}\rightarrow f^{*}\tilde{Y}$. It further follows from the monoidality of $\alpha$ that the corresponding natural transformation $\tilde{\alpha}: f_{!}\tilde{X}\rightarrow \tilde{Y}$ is monoidal. Hence, $(\tilde{\alpha},f)$ is a morphism of templicial objects $\tilde{X}\rightarrow \tilde{Y}$.

If further $\beta: Y\rightarrow Z$ is a morphism of based colax monoidal functors, then by uniqueness, $(\beta\circ \alpha)^{a,b} = \beta^{f(a),f(b)}\circ \alpha^{a,b}$ for all $a,b\in S$. It follows that the assignments $X\mapsto (\tilde{X},S)$ and $\alpha\mapsto (\tilde{\alpha},f)$ define a functor.
\end{Con}

\begin{Thm}\label{theorem: comparison is equiv.}
Suppose $\mathcal{V}$ is decomposing. Then we have an adjoint equivalence of categories
\[\begin{tikzcd}
	{\ts\mathcal{V}} & {\Colax_{b}(\fint^{op},\mathcal{V})}
	\arrow["{\mathfrak{c}}", shift left=2, from=1-1, to=1-2]
	\arrow["{\mathfrak{d}}", shift left=2, from=1-2, to=1-1]
	\arrow["\sim"{description}, draw=none, from=1-1, to=1-2]
\end{tikzcd}\]
\end{Thm}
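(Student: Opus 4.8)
The plan is to show that $\mathfrak{c}$ and $\mathfrak{d}$ are mutually quasi-inverse by exhibiting natural isomorphisms $\mathfrak{c}\mathfrak{d}\Rightarrow \id$ and $\id\Rightarrow \mathfrak{d}\mathfrak{c}$, and then to upgrade this to an adjoint equivalence. A preliminary simplification is that both functors leave the underlying base set $S$ untouched: on the $\Set$-component of the two $2$-pullbacks defining $\ts\mathcal{V}$ and $\Colax_b(\fint^{op},\mathcal{V})$ every comparison map is the identity, so throughout I only need to track the $\Colax$/quiver part of the data, with the associated set map $f$ coming along for free.

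For the counit $\varepsilon\colon \mathfrak{c}\mathfrak{d}\Rightarrow \id_{\Colax_b(\fint^{op},\mathcal{V})}$, start from a based colax monoidal functor $X$ with base $S$. By definition $\mathfrak{d}X$ is the templicial object $\tilde X$ with quiver components $\tilde X_n(a,b)=X_n(a,b)$ the decomposing equalizers of Construction \ref{construction: decomposing equalizer of cc-functors}, so $\mathfrak{c}\mathfrak{d}X_n=\coprod_{a,b\in S}X_n(a,b)$. I would take $\varepsilon_X$ to be the canonical map $(e_{a,b})_{a,b}\colon \coprod_{a,b}X(a,b)\to X$, which is an isomorphism by Proposition \ref{proposition: decomposition of Fts-object}.1, with its compatibility with the base recorded by Proposition \ref{proposition: decomposition of Fts-object}.2. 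I then check that $\varepsilon_X$ is a morphism of based colax monoidal functors (compatibility with $\mu$ and $\epsilon$ being exactly Proposition \ref{proposition: decomposition of Fts-object}.3 and the counitality), and that it is natural in $X$; naturality follows from the defining equalizer property of the $e_{a,b}$ together with the uniqueness clause $e_{f(a),f(b)}\alpha^{a,b}=\alpha e_{a,b}$ used to construct $\mathfrak{d}\alpha$.

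The more delicate direction is the unit $\eta\colon \id_{\ts\mathcal{V}}\Rightarrow \mathfrak{d}\mathfrak{c}$. Given a templicial object $(X,S)$ with $X\colon \fint^{op}\to \mathcal{V}\Quiv_S$, applying $\mathfrak{c}$ gives $\mathfrak{c}X_n=\coprod_{a,b}X_n(a,b)$, and I must identify the decomposing equalizer that $\mathfrak{d}$ extracts from $\mathfrak{c}X$. The crux is to unwind the comultiplication $\mu_{0,-,0}$ of $\mathfrak{c}X$: since $X$ is strongly unital, $\mathfrak{c}X_0\cong F(S)=\coprod_a I$, and transporting the quiver comultiplication along the colax structure of $(t_S)_!$ shows that $\mu_{0,-,0}$ restricted to the summand $X_n(a,b)$ is the coprojection into the $(a,b)$-th outer copy of $\coprod_{a,b}\mathfrak{c}X_n$; equivalently, $\mu_{0,-,0}^{\mathfrak{c}X}$ is exactly the decomposing morphism induced by the coproduct decomposition $\mathfrak{c}X=\coprod_{a,b}X(a,b)$. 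Comparing with the coprojection $c_{a,b}$ and using disjointness of coproducts then identifies the equalizer defining $(\mathfrak{d}\mathfrak{c}X)_n(a,b)$ with the summand $X_n(a,b)$, and $\eta_{(X,S)}$ is the induced comparison, which is an isomorphism. I expect this identification of $\mu_{0,-,0}$ for $\mathfrak{c}X$ in terms of the original quiver comultiplication to be the main obstacle, since it requires carefully tracking the colax monoidal structure of $(t_S)_!$ and how the degenerate factors $X_0(a,a)\cong I$ select the indices; here Lemma \ref{lemma: decomposing equalizers of cc-functors} guarantees the morphism is decomposing, and Lemma \ref{lemma: combined decomposing equalizer is split} pins down the resulting split equalizer.

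Finally I would assemble $\eta$ and $\varepsilon$ into an adjoint equivalence. The clean route is to verify the two triangle identities directly: after passing to the summands of $\coprod_{a,b}X_n(a,b)$ and to equalizer components, each identity reduces to the counitality of the colax structures, i.e.\ to the relations already encoded by Lemma \ref{lemma: decomposing equalizers of cc-functors} and Proposition \ref{proposition: decomposition of Fts-object}. Alternatively, having produced two natural isomorphisms, $\mathfrak{c}$ and $\mathfrak{d}$ form an equivalence, and any equivalence can be promoted to an adjoint equivalence by correcting one of the two natural transformations; I would retain $\varepsilon$ as the counit and adjust $\eta$ if the triangle identities do not hold on the nose, thereby avoiding a lengthy diagram chase.
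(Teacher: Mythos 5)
Your proposal is correct and follows essentially the same route as the paper: the counit is the isomorphism $(e_{a,b})_{a,b}\colon \coprod_{a,b}X(a,b)\to X$ of Proposition \ref{proposition: decomposition of Fts-object}, and the unit comes from identifying $\mu_{0,-,0}$ on $\mathfrak{c}X$ with the tautological decomposing morphism $\coprod_{c,d}\iota_{c,d}$ of the coproduct decomposition, then computing the defining equalizer via disjointness of coproducts and their commutation with decomposing equalizers. The only cosmetic difference is that you lean on Lemma \ref{lemma: combined decomposing equalizer is split} at a point where the paper instead invokes condition (a) of Definition \ref{definition: decomposing monoidal category} directly, but the substance of the argument is identical.
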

\begin{proof}
The isomorphism of Proposition \ref{proposition: decomposition of Fts-object}.$1$ is monoidal by $2.$ and $3.$ of the same Proposition. Moreover, it is directly seen to be natural in $X$. Thus $\mathfrak{c}\circ \mathfrak{d}\simeq \id$.

Let $(X,S)$ be a templicial object of $\mathcal{V}$. We have a functor $X(a,b): \fint^{op}\rightarrow \mathcal{V}$ for every $a,b\in S$. As coproducts are disjoint in $\mathcal{V}$, the equalizer of $\iota_{a,b},\iota_{c,d}: X(c,d)\rightarrow \coprod_{x,y\in S}X(c,d)$ in $\Fun(\fint^{op},\mathcal{V})$ is $X(a,b)$ if $(c,d) = (a,b)$ and $0$ otherwise. Because coproducts commute with decomposing equalizers, we get an equalizer diagram
\[\begin{tikzcd}
	{X(a,b)} & {\underset{c,d\in S}{\coprod}X(c,d)} & {\underset{c,d\in S}{\coprod}\underset{x,y\in S}{\coprod}X(c,d)}
	\arrow["{\iota_{a,b}}", from=1-1, to=1-2]
	\arrow["{\coprod_{c,d}\iota_{c,d}}", shift left=1, from=1-2, to=1-3]
	\arrow["{\coprod_{c,d}\iota_{a,b}}"', shift right=1, from=1-2, to=1-3]
\end{tikzcd}\]
Now $\coprod_{c,d}X(c,d)$ is the functor underlying $\mathfrak{c}(X,S)$ and the morphisms $\coprod_{c,d}\iota_{c,d}$ and $\coprod_{c,d}\iota_{a,b}$ correspond to the induced morphisms $\mu_{0,-,0}$ and $c_{a,b}$ on $\mathfrak{c}(X,S)$ respectively. Consequently, we have an isomorphism between the underlying functors of $(X,S)$ and $\mathfrak{d}\mathfrak{c}(X,S)$. It follows from the definitions that this isomorphism is monoidal and that it is natural in $(X,S)$. Therefore $\mathfrak{d}\circ \mathfrak{c}\simeq \id$.

Finally, the triangle identities are easily verified.
\end{proof}

\section{Templicial objects versus enriched Segal precategories}\label{section: Templicial objects versus enriched Segal precategories}

In this section, we compare templicial objects to the enriched Segal precategories of \cite{lurie2009goodwillie} for cartesian $\mathcal{V}$. Following \cite{simpson2012homotopy}, we consider conditions on $\mathcal{V}$ for which they become equivalent in \S\ref{subsection: The cartesian case}. Next, in \S\ref{subsection: The non-cartesian case} we consider Bacard's generalization of enriched precategories for non-cartesian $\mathcal{V}$ \cite{bacard2010Segal} and show that these cannot reasonably be equivalent to templicial objects in general.

\begin{Def}[\cite{lurie2009goodwillie}, Definition 2.1.1]
Let $S$ be a set. Consider the forgetful functor $\simp\rightarrow \Set$, then we define the comma category:
$$
\simp_{S} = (\simp\downarrow S)
$$
Explicitly, the objects of $\simp_{S}$ may be identified with sequences $(a_{i})_{i=0}^{n} = (a_{0},\dots,a_{n})$ for some $n\geq 0$ and $a_{i}\in S$. A morphism $(a_{i})_{i=0}^{n}\rightarrow (b_{j})_{j=0}^{m}$ in $\simp_{S}$ is given by a morphism $h: [n]\rightarrow [m]$ in $\simp$ such that $b_{h(i)} = a_{i}$ for all $i\in [n]$.

Given a map of sets $f: S\rightarrow T$, let $\simp(f): \simp_{S}\rightarrow \simp_{T}$ denote the induced functor on comma categories. That is, $\simp(f)(a_{0},\dots,a_{n}) = (f(a_{0}),\dots,f(a_{n})))$ for $(a_{i})_{i=0}^{n}\in \simp_{S}$.
\end{Def}

We are interested in functors $X: \simp^{op}_{S}\rightarrow \mathcal{V}$ for some set $S$. In particular, such a functor specifies an object $X(a_{0},\dots,a_{n})\in \mathcal{V}$ for each sequence $(a_{0},\dots,a_{n})$ of elements of $S$. Moreover, for each morphism $h: [m]\rightarrow [n]$ in $\simp$, we have a morphism
$$
X(a_{0},\dots,a_{n})\rightarrow X(a_{h(0)},\dots,a_{h(m)})
$$
in $\mathcal{V}$ which is compatible with compositions and identities in $\simp$.

\begin{Def}[\cite{lurie2009goodwillie}, Definition 2.1.3]\label{definition: enriched Segal precat.}
Assume $(\mathcal{V},\times,1)$ is cartesian monoidal. A \emph{$\mathcal{V}$-enriched precategory} is a pair $(X,S)$ with $S$ a set and $X: \simp^{op}_{S}\rightarrow \mathcal{V}$ a functor such that the terminal map $X(a_{0})\rightarrow 1$ is an isomorphism for all $a_{0}\in S$.

A \emph{morphism} $(X,S)\rightarrow (Y,T)$ of $\mathcal{V}$-enriched precategories is a pair $(\alpha,f)$ with $f: S\rightarrow T$ a map of sets and $\alpha: X\rightarrow Y\circ \simp(f)$ a natural transformation. Composition and identities of such morphisms are defined in the obvious way, and we denote
$$
\mathbf{PC}(\mathcal{V})
$$
for the category of $\mathcal{V}$-enriched precategories and morphisms between them.
\end{Def}

\subsection{The cartesian case}\label{subsection: The cartesian case}

For this subsection, we assume that $(\mathcal{V},\times,1)$ is cartesian monoidal. Following \cite{simpson2012homotopy}, we consider some conditions on $\mathcal{V}$ (Definition \ref{definition: DISJ}) that allow to compare $\mathbf{PC}(\mathcal{V})$ with the category of $S_{b}\mathcal{V}$ of Remark \ref{remark: based colax functors in a cartesian category}. We then show that under these conditions, all the models considered so far coincide.

\begin{Con}
Recall the free functor $F: \Set\rightarrow \mathcal{V}: S\mapsto \coprod_{a\in S}1$. Fix a set $S$ and let $f: A\rightarrow F(S)$ be a morphism in $\mathcal{V}$. Given $a\in S$, we denote $f^{-1}(a)$ for the pullback
$$
\xymatrix{
f^{-1}(a)\ar[r]\ar[d] & A\ar[d]^{f}\\
1\ar[r]_{\iota_{a}} & F(S)
}
$$
The assignment $f\mapsto (f^{-1}(a))_{a\in S}$ extends to a functor $\mathfrak{d}_{S}: \mathcal{V}/F(S)\rightarrow \prod_{a\in S}\mathcal{V}$.

Conversely, let $(A_{a})_{a\in S}$ be a collection of objects in $\mathcal{V}$. Then we obtain a morphism $\coprod_{a\in S}A_{a}\rightarrow \coprod_{a\in S}1\simeq F(S)$. This extends to a functor $\mathfrak{c}_{S}: \prod_{a\in S}\mathcal{V}\rightarrow \mathcal{V}/F(S)$.

Note that to any morphism $f: A\rightarrow F(S)$, we can associate a canonical morphism $\coprod_{a\in S}f^{-1}(a)\rightarrow A$ over $F(S)$, and for any collection $(A_{a})_{a\in S}$, there are canonical morphisms $A_{a}\rightarrow (\coprod_{a\in S}t)^{-1}(a) = \mathfrak{d}_{S}\mathfrak{c}_{S}((A_{a})_{a\in S})$ in $\mathcal{V}$ for each $a\in S$. These form the counit and unit of an adjunction
\[\begin{tikzcd}
	{\prod_{a\in S}\mathcal{V}} & {\mathcal{V}/F(S)}
	\arrow[""{name=0, anchor=center, inner sep=0}, "{\mathfrak{c}_{S}}", shift left=2, from=1-1, to=1-2]
	\arrow[""{name=1, anchor=center, inner sep=0}, "{\mathfrak{d}_{S}}", shift left=2, from=1-2, to=1-1]
	\arrow["\dashv"{anchor=center, rotate=-90}, draw=none, from=0, to=1]
\end{tikzcd}\] 
\end{Con}

\begin{Def}[\cite{simpson2012homotopy}, Condition 10.7.1]\label{definition: DISJ}
We say $\mathcal{V}$ \emph{satisfies (DISJ)} if
\begin{enumerate}[(a)]
\item For any set $S$, the adjunction $\mathfrak{c}_{S}\dashv \mathfrak{d}_{S}$ is an equivalence of categories.
\item The terminal object $1$ in $\mathcal{V}$ is indecomposable. That is, for all $A,B\in \mathcal{V}$, $1\simeq A\amalg B$ implies that either $A$ or $B$ is an initial object of $\mathcal{V}$.
\item The category $\mathcal{V}$ has at least two non-isomorphic objects.
\end{enumerate}
\end{Def}

It is shown in \cite[Lemma 10.7.2(5)]{simpson2012homotopy} that the free functor $F: \Set\rightarrow \mathcal{V}$ is fully faithful when $\mathcal{V}$ satisfies (DISJ). Thus we may view $\Set$ as a full subcategory of $\mathcal{V}$.  It is then clear from the definition of $S_{b}\mathcal{V}$ as a $2$-pullback \eqref{diagram: based simp. objs.} that it is equivalent to the full subcategory of $S\mathcal{V}$ spanned by those simplicial objects $X: \simp^{op}\rightarrow \mathcal{V}$ with $X_{0}\in \Set$. We can then reformulate a result of Simpson as follows.

\begin{Thm}[\cite{simpson2012homotopy}, Theorem 10.7.3]\label{theorem: simpson}
Assume that $\mathcal{V}$ satisfies (DISJ). Then we have an equivalence of categories
$$
\mathbf{PC}(\mathcal{V})\simeq S_{b}\mathcal{V}
$$
\end{Thm}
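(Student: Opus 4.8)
\emph{Plan.} I would realise both sides as categories of ``decorated'' simplicial objects fibred over $\Set$ and identify them one simplicial level at a time, with condition (DISJ)(a) doing the work at each level. The starting observation is that $\simp_{S}$ is the category of elements of the simplicial set $E_{S}$ with $(E_{S})_{n} = S^{n+1}$ and the evident reindexing structure maps (the $0$-coskeleton of $S$), and that the forgetful functor $\pi_{S}\colon \simp_{S}\rightarrow \simp$ is the associated discrete fibration. I would send a functor $X\colon \simp^{op}_{S}\rightarrow \mathcal{V}$ to the left Kan extension $\Lan_{\pi^{op}_{S}}X\colon \simp^{op}\rightarrow \mathcal{V}$; since $\pi^{op}_{S}$ is a discrete opfibration this is computed levelwise by
\[
\left(\Lan_{\pi^{op}_{S}}X\right)_{n} \;=\; \coprod_{(a_{0},\dots,a_{n})\in S^{n+1}} X(a_{0},\dots,a_{n}),
\]
and it carries a tautological morphism to the simplicial object $F(E_{S})\colon n\mapsto F(S^{n+1})$.

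\emph{The fixed-$S$ equivalence.} For a fixed set $S$ I claim that $\Lan_{\pi^{op}_{S}}$ is one half of an equivalence
\[
\Fun(\simp^{op}_{S},\mathcal{V}) \;\simeq\; \Fun(\simp^{op},\mathcal{V})/F(E_{S}).
\]
This is the $\mathcal{V}$-valued analogue of the classical identification of presheaves on a category of elements with a slice of presheaves, and I would prove it by applying the equivalence $\mathfrak{c}_{S}\dashv \mathfrak{d}_{S}$ of (DISJ)(a) in each simplicial degree: an object $Y\rightarrow F(E_{S})$ amounts to morphisms $Y_{n}\rightarrow F(S^{n+1})$, hence by (DISJ)(a) to families $(Y^{(a_{0},\dots,a_{n})}_{n})_{\vec{a}}$ of objects of $\mathcal{V}$, and naturality in $[n]$ reorganises these families into a functor on $\simp^{op}_{S}$; the coproduct formula above reassembles them. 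The structure maps match because the transition morphisms of a slice object are governed by the reindexing maps of $E_{S}$, which are precisely the morphisms of $\simp_{S}$. Next I would note that $F$ preserves finite products: pulling back $A\times F(T)\rightarrow F(T)$ along the coprojections gives the constant family $A$, so $A\times F(T)\simeq \coprod_{T}A$ by (DISJ)(a), whence $F(S)\times F(T)\simeq F(S\times T)$ and therefore $F(E_{S})\simeq \cosk_{0}F(S)$. Under the coskeleton adjunction a morphism $Y\rightarrow F(E_{S})$ is the same as a morphism $Y_{0}\rightarrow F(S)$, and this is an isomorphism exactly when each component $Y^{(a_{0})}_{0}\rightarrow 1$ is, i.e. exactly when $X(a_{0})\simeq 1$ for all $a_{0}\in S$. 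Hence the equivalence restricts to one between $\mathcal{V}$-enriched precategories on $S$ and based simplicial objects with base $S$, the latter read (via the fully faithful $F$ of \cite[Lemma 10.7.2(5)]{simpson2012homotopy}) as simplicial objects $Y$ with an isomorphism $Y_{0}\simeq F(S)$, as in the reformulation of $S_{b}\mathcal{V}$ preceding the statement.

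\emph{Assembling over $\Set$, and the main obstacle.} Finally I would promote these fibrewise equivalences to a global one. Both $\mathbf{PC}(\mathcal{V})$ and $S_{b}\mathcal{V}$ are fibred over $\Set$ by $(X,S)\mapsto S$, with comma-type morphisms involving a map $f\colon S\rightarrow T$, and I would check that $\Lan_{\pi^{op}_{S}}$ intertwines the reindexing $\simp(f)^{*}$ on the precategory side with the functor induced by $F(E_{S})\rightarrow F(E_{T})$ on the slice side, so that the fibrewise equivalences glue into a functor $\mathbf{PC}(\mathcal{V})\rightarrow S_{b}\mathcal{V}$ together with a quasi-inverse. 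I expect this last step---reconciling the two comma/Grothendieck morphism structures and verifying naturality in $S$---to be the main obstacle, whereas the fixed-$S$ equivalence is essentially formal once (DISJ)(a) is available. Conditions (b) and (c) of (DISJ) enter only through the full faithfulness of $F$, which guarantees that the base $S$ is recovered from $Y_{0}$ and hence that the gluing over $\Set$ is unambiguous.
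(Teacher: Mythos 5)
The paper does not actually prove this theorem: it is imported verbatim from Simpson (\cite[Theorem 10.7.3]{simpson2012homotopy}), the only in-paper content being the reformulation of the right-hand side as the $2$-pullback $S_{b}\mathcal{V}$. So your proposal can only be compared with Simpson's own argument, and it essentially reconstructs it: identifying $\simp_{S}$ with the category of elements of $E_{S}=\cosk_{0}(S)$, decomposing a simplicial object over $F(E_{S})\simeq\cosk_{0}F(S)$ degreewise via the equivalence $\mathfrak{c}_{S'}\dashv\mathfrak{d}_{S'}$ of (DISJ)(a), and matching the unitality condition $X(a_{0})\simeq 1$ with invertibility of $Y_{0}\to F(S)$ is exactly the mechanism in \emph{loc.\ cit.}, so your sketch is correct. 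Three refinements. First, in the fixed-$S$ step the transition map $Y(h)$ for $h\colon[m]\to[n]$ is a morphism between objects of \emph{different} slices $\mathcal{V}/F(S^{n+1})$ and $\mathcal{V}/F(S^{m+1})$; to extract components $Y_{n}^{\vec{a}}\to Y_{m}^{\vec{a}\circ h}$ one uses $F(E_{S})(h)\circ\iota_{\vec{a}}=\iota_{\vec{a}\circ h}$ and the universal property of the pullback, i.e.\ compatibility of the (DISJ)(a) equivalences with base change; this is the content hidden in your phrase ``the structure maps match'', and it does go through by pasting of pullbacks. Second, your expectation that the assembly over $\Set$ is ``the main obstacle'' is too pessimistic: both $\mathbf{PC}(\mathcal{V})$ and $S_{b}\mathcal{V}$ are Grothendieck constructions of pseudofunctors on $\Set$ (with reindexing $-\circ\simp(f)$ and composition with $F(f)$ respectively), and the fibrewise equivalence you construct is pseudonatural in $S$ by construction, so this step is routine. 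Third, conditions (b) and (c) of (DISJ) are not needed at all for the statement as formulated here: since $S_{b}\mathcal{V}$ is the $2$-pullback, the set $S$ and the isomorphism $Y_{0}\simeq F(S)$ are part of the data, and only (a) is used; full faithfulness of $F$ (\cite[Lemma 10.7.2(5)]{simpson2012homotopy}, which does use (b) and (c)) is needed only for the paper's preceding identification of $S_{b}\mathcal{V}$ with the full subcategory of $S\mathcal{V}$ spanned by the $X$ with $X_{0}\in\Set$, so your closing attribution of their role is right in spirit but slightly misplaced.
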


\begin{Lem}\label{lemma: coproducts commute with equalizers and pullbacks under DISJ}
Suppose that $\mathcal{V}$ satisfies condition $(a)$ of (DISJ). Then coproducts commute with equalizers and pullbacks in $\mathcal{V}$.
\end{Lem}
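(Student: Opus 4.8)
The plan is to exploit the equivalence supplied by condition $(a)$ of (DISJ) by recognizing the $S$-indexed coproduct functor as a composite that passes through the slice category $\mathcal{V}/F(S)$. First I would observe that the coproduct functor $\coprod\colon \prod_{a\in S}\mathcal{V}\rightarrow \mathcal{V}$ factors as $U\circ \mathfrak{c}_{S}$, where $U\colon \mathcal{V}/F(S)\rightarrow \mathcal{V}$ is the forgetful functor: indeed $\mathfrak{c}_{S}$ sends a family $(A_{a})_{a}$ to the canonical structure map $\coprod_{a}A_{a}\rightarrow F(S)$, and forgetting this structure map recovers $\coprod_{a}A_{a}$, with the analogous statement on morphisms. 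Since equalizers and pullbacks in the product category $\prod_{a\in S}\mathcal{V}$ are computed componentwise, the assertion that coproducts commute with them amounts to showing that $U\circ \mathfrak{c}_{S}$ preserves equalizers and pullbacks.

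Next I would treat the two factors separately. The functor $\mathfrak{c}_{S}$ is part of the equivalence $\mathfrak{c}_{S}\dashv \mathfrak{d}_{S}$ postulated by (DISJ)$(a)$, and an equivalence of categories preserves all limits (and colimits), so in particular $\mathfrak{c}_{S}$ preserves equalizers and pullbacks, carrying the componentwise limits of $\prod_{a\in S}\mathcal{V}$ to the corresponding limits in $\mathcal{V}/F(S)$ with their canonical structure maps. For $U$, I would invoke the standard fact that the forgetful functor from a slice category creates, hence preserves, all \emph{connected} limits: given a connected limit diagram in $\mathcal{V}/F(S)$, all the structure maps to $F(S)$ agree on the limit formed in $\mathcal{V}$, so that limit inherits a canonical map to $F(S)$ exhibiting it as the limit in the slice. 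Both the equalizer shape $\bullet\rightrightarrows\bullet$ and the pullback cospan $\bullet\rightarrow\bullet\leftarrow\bullet$ are connected, so $U$ preserves them. Composing the two, $U\circ\mathfrak{c}_{S}=\coprod$ preserves equalizers and pullbacks, which is the claim.

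The main point to get right, rather than a genuine obstacle, is the connectedness hypothesis: the slice forgetful functor preserves connected limits but emphatically not arbitrary products, and it is precisely this that distinguishes the present lemma from the (false) statement that coproducts commute with all limits. I would therefore verify the connectedness of the equalizer and pullback shapes explicitly, and confirm carefully that $\mathfrak{c}_{S}$ really does send the componentwise equalizer and pullback of $\prod_{a\in S}\mathcal{V}$ to the expected objects over $F(S)$, so that no discrepancy in the structure maps is hidden when $U$ is applied.
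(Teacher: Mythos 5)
Your proposal is correct and follows essentially the same route as the paper: identify the coproduct functor $\prod_{a\in S}\mathcal{V}\rightarrow\mathcal{V}$ with the slice forgetful functor $\mathcal{V}/F(S)\rightarrow\mathcal{V}$ via the equivalence from (DISJ)$(a)$, then use the fact that this forgetful functor preserves equalizers and pullbacks. Your write-up is merely more explicit than the paper's about the two ingredients (the equivalence preserving all limits, and the slice forgetful functor preserving connected limits), which is a welcome elaboration rather than a deviation.
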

\begin{proof}
Under the equivalence $\prod_{a\in S}\mathcal{V}\simeq \mathcal{V}/F(S)$, the forgetful functor $\mathcal{V}/F(S)\rightarrow \mathcal{V}$ coincides with the coproduct functor $\prod_{a\in S}\mathcal{V}\rightarrow \mathcal{V}: (A_{a})_{a\in S}\mapsto \coprod_{a\in S}A_{a}$. The result follows by noting that $\mathcal{V}/F(S)\rightarrow \mathcal{V}$ preserves pullbacks and equalizers.
\end{proof}

\begin{Prop}\label{proposition: DISJ implies decomposing}
Assume that $\mathcal{V}$ satisfies (DISJ). Then $\mathcal{V}$ is decomposing.
\end{Prop}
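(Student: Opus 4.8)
The plan is to verify the three conditions of Definition \ref{definition: decomposing monoidal category} in turn, remembering that we are in the cartesian case $\otimes = \times$. Condition (a) is immediate: by Lemma \ref{lemma: coproducts commute with equalizers and pullbacks under DISJ}, part (a) of (DISJ) already guarantees that coproducts commute with \emph{all} equalizers in $\mathcal{V}$, in particular with decomposing ones. Condition (c) needs no more than the cartesian structure: for a fixed $A$, the functor $-\times A$ preserves connected limits, since the limit of the constant diagram at $A$ over a connected shape is again $A$ and limits commute with products; an equalizer is a limit over the connected diagram $\bullet\rightrightarrows\bullet$, so $-\times A$ preserves equalizers in each variable, and in particular decomposing equalizers. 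Thus the entire content lies in condition (b), the disjointness of coproducts, which I expect to be the main obstacle.

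For (b) I would first treat the free coproducts $F(I) = \coprod_{i\in I}1$ and then bootstrap to arbitrary coproducts. The key input is that the unit of the adjunction $\mathfrak{c}_S\dashv \mathfrak{d}_S$ is an isomorphism: for any family $(B_a)_{a\in I}$ the canonical maps $B_a\to p^{-1}(a)$ are isomorphisms, where $p = \coprod_a t_a\colon \coprod_a B_a\to F(I)$ and $p^{-1}(a)$ is the pullback of $p$ along $\iota_a\colon 1\to F(I)$. Applying this to the family with $B_a = 1$ and $B_b = 0$ for $b\neq a$, whose coproduct is $1$ and whose structure map to $F(I)$ is exactly $\iota_a$, I read off that the pullback of $\iota_a\colon 1\to F(I)$ along $\iota_b\colon 1\to F(I)$ is $1$ when $b = a$ and $0$ when $b\neq a$. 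The first statement, together with the fact that both projections out of a terminal object are identities, shows that each $\iota_a\colon 1\to F(I)$ is a monomorphism; the second shows the pairwise intersections are initial. Hence $F(I)$ is a disjoint coproduct.

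To pass to a general coproduct $\coprod_{i\in I}A_i$, I would again invoke the unit isomorphism, now for the family $(A_i)_i$: it exhibits the coprojection $\iota_j\colon A_j\to \coprod_i A_i$ as the pullback of $\iota_j\colon 1\to F(I)$ along the canonical map $p\colon \coprod_i A_i\to F(I)$, via the square with vertical legs $t_j$ and $p$. Since $\iota_j\colon 1\to F(I)$ is monic and monomorphisms are stable under pullback, each $\iota_j\colon A_j\to \coprod_i A_i$ is monic. For the intersections, for $i\neq j$ I would compute, by pullback pasting,
$$
A_i\times_{\coprod_k A_k}A_j\;\cong\;\Big(\coprod_k A_k\Big)\times_{F(I)}\big(1_i\times_{F(I)}1_j\big),
$$
where $1_i\times_{F(I)}1_j\cong 0$ by the disjointness of $F(I)$ just established. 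Because coproducts commute with pullbacks (Lemma \ref{lemma: coproducts commute with equalizers and pullbacks under DISJ}), pulling back along the empty coproduct $0$ yields $0$, so the intersection is initial. This gives disjointness and completes condition (b).

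The main obstacle is precisely this bootstrapping: reformulating (DISJ)(a) as an extensivity statement for $F(I)$ and then transporting disjointness along the canonical map $\coprod_i A_i\to F(I)$, where the commutation of coproducts with pullbacks is exactly what forces the crossed fibres to collapse to the initial object. Notably, conditions (b) and (c) of (DISJ) play no role here; only (DISJ)(a) is used.
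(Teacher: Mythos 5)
Your handling of conditions (a) and (c) of Definition \ref{definition: decomposing monoidal category} coincides with the paper's proof: (a) is exactly Lemma \ref{lemma: coproducts commute with equalizers and pullbacks under DISJ}, and for (c) the paper likewise just observes that in a cartesian category the monoidal product preserves equalizers in each variable (your connected-limits argument is the standard justification of that assertion). Where you genuinely diverge is condition (b): the paper disposes of disjointness of coproducts by citing \cite[Lemma 10.7.2(7)]{simpson2012homotopy}, whereas you re-derive it from (DISJ)(a) alone. Your route --- reading off from the unit isomorphism of $\mathfrak{c}_{S}\dashv \mathfrak{d}_{S}$ that $1_{a}\times_{F(I)}1_{b}$ is $1$ or $0$ according to whether $a=b$ (writing $1_{a}$ for the copy of $1$ included via $\iota_{a}$), and then transporting this along the canonical map $p:\coprod_{i}A_{i}\rightarrow F(I)$, whose fibre over $\iota_{j}$ is $A_{j}$ again by the unit isomorphism --- is an honest extensivity-style argument. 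It buys a self-contained proof, and it makes visible something the paper's citation leaves opaque, namely that only part (a) of (DISJ) is used.

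One step does need patching. At the end you claim $\bigl(\coprod_{k}A_{k}\bigr)\times_{F(I)}0\cong 0$ ``because coproducts commute with pullbacks.'' But Lemma \ref{lemma: coproducts commute with equalizers and pullbacks under DISJ} asserts that the coproduct functor $\prod_{a\in S}\mathcal{V}\rightarrow \mathcal{V}$ preserves pullbacks, i.e.\ it applies to an $S$-indexed \emph{family} of cospans; reading $0$ as the empty coproduct would only yield $0\times_{0}0\cong 0$, which is not your situation, since in the cospan $\coprod_{k}A_{k}\rightarrow F(I)\leftarrow 0$ the other two objects are not empty coproducts. Two easy repairs: (i) apply (DISJ)(a) with $S=\emptyset$, which gives $\mathcal{V}/0\simeq \prod_{\emptyset}\mathcal{V}$, i.e.\ the initial object is strict; then the projection $\bigl(\coprod_{k}A_{k}\bigr)\times_{F(I)}0\rightarrow 0$ forces the pullback to be initial. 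Or (ii) write your cospan as the $I$-indexed coproduct of the cospans $A_{k}\rightarrow 1\leftarrow 0$, apply the lemma to get $\coprod_{k}(A_{k}\times 0)$, and use $A_{k}\times 0\cong 0$, which holds because $\mathcal{V}$ is cartesian closed under the paper's standing cosmos assumption. With either repair your argument is complete and correct.
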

\begin{proof}
Since $\mathcal{V}$ is cartesian, the monoidal product preserves any equalizer in each variable. Moreover, it is shown in \cite[Lemma 10.7.2(7)]{simpson2012homotopy} that coproducts are disjoint in $\mathcal{V}$. Finally, it follows from Lemma \ref{lemma: coproducts commute with equalizers and pullbacks under DISJ} that coproducts commute with all equalizers.
\end{proof}

The converse to Proposition \ref{proposition: DISJ implies decomposing} does not hold.

\begin{Exs}
\begin{itemize}
\item Consider the cartesian monoidal category $k\Coalg$ of coalgebras over a field $k$, which is decomposing by Proposition \ref{proposition: stability of decomposing under constructions}.2. However, $k\Coalg$ does not satisfy $(a)$ of (DISJ). Indeed, consider the cosimple coalgebra of $(n\times n)$-matrices $M_{n}(k)$ with comultiplication $\Delta$ and counit $\epsilon$ given on elementary matrices by $\Delta(E_{ij}) = \sum_{k=1}^{n}E_{ik}\otimes E_{kj}$ and $\epsilon(E_{ij}) = \delta_{ij}$ (the Kronecker delta).

Let us take $n = 2$. It is easy to check that we have a coalgebra map
$$
f: M_{2}(k)\rightarrow k\oplus k: (a_{pq})_{pq}\mapsto (a_{11},a_{22})
$$
Consider the pullbacks $f^{-1}(1), f^{-1}(2)$ of $f$ along the canonical inclusions $\iota_{1},\iota_{2}: k\hookrightarrow k\oplus k$. Then $f^{-1}(i)$ is a subcoalgebra of $M_{2}(k)$ that only contains matrices $(a_{pq})_{pq}$ with $a_{ii} = 0$. Since $M_{2}(k)$ is cosimple, this implies that $f^{-1}(i) = \{0\}$. But then clearly $f^{-1}(1)\oplus f^{-1}(2)$ does not recover $M_{2}(k)$.
\item Consider the monoid $(\{0,1\},*,1)$ in $\Set$ with $0*0 = 0*1 = 1*0 = 0$ and $1*1 = 1$. Then by Proposition \ref{proposition: stability of decomposing under constructions}.1, the over category $\Set/\{0,1\}$ is still decomposing, but its terminal object (the identity on) $\{0,1\}$ is not indecomposable since $\{0\}\amalg \{1\}\simeq \{0,1\}$. So $\Set/\{0,1\}$ does not satisfy $(b)$ of (DISJ).
\item The category $[0]$ with a single object and morphism, with the unique monoidal structure. It is trivially decomposing but doesn't satisfy $(c)$ of (DISJ).
\end{itemize}
\end{Exs}

\begin{Cor}\label{corollary: temp. objs. vs precats.}
Assume that $\mathcal{V}$ satisfies (DISJ). Then we have equivalences
$$
\ts\mathcal{V}\simeq \Colax_{b}(\fint^{op},\mathcal{V})\simeq S_{b}\mathcal{V} \simeq \mathbf{PC}(\mathcal{V})
$$
\end{Cor}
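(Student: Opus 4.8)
The plan is to chain together three equivalences that are already established in the excerpt, checking only that the single hypothesis (DISJ) simultaneously supplies all of the conditions each of them requires. Throughout we are in the cartesian case, so $(\mathcal{V},\times,1)$ is cartesian monoidal.

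First I would use Proposition \ref{proposition: DISJ implies decomposing}, which says that (DISJ) implies $\mathcal{V}$ is decomposing. This is the crucial bridge: it lets us invoke Theorem \ref{theorem: comparison is equiv.}, whose hypothesis is precisely that $\mathcal{V}$ be decomposing, to obtain the leftmost equivalence $\ts\mathcal{V}\simeq \Colax_{b}(\fint^{op},\mathcal{V})$. Next, since $\mathcal{V}$ is cartesian, Remark \ref{remark: based colax functors in a cartesian category} — which combines Leinster's Proposition \ref{proposition: Leinster} with the identification $\mathcal{V}\simeq \Comon(\mathcal{V})$ of Remark \ref{remark: comonoids in a cartesian category} — yields the middle equivalence $\Colax_{b}(\fint^{op},\mathcal{V})\simeq S_{b}\mathcal{V}$. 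Finally, Theorem \ref{theorem: simpson} (Simpson's theorem) gives the rightmost equivalence $S_{b}\mathcal{V}\simeq \mathbf{PC}(\mathcal{V})$ directly from (DISJ). Composing the three equivalences produces the claimed chain.

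There is no substantive obstacle: the corollary is pure assembly, and the only point genuinely worth confirming is that (DISJ) feeds each link of the chain. It enters Theorem \ref{theorem: comparison is equiv.} indirectly, through the implication ``(DISJ) $\Rightarrow$ decomposing'' of Proposition \ref{proposition: DISJ implies decomposing}; it enters Remark \ref{remark: based colax functors in a cartesian category} only through the standing cartesian assumption; and it enters Theorem \ref{theorem: simpson} as its own literal hypothesis. Since the cartesian hypothesis of the subsection and the assumption (DISJ) together cover all three, all three equivalences hold at once, and the corollary follows.
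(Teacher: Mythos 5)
Your proposal is correct and follows exactly the same route as the paper, which simply combines Proposition \ref{proposition: DISJ implies decomposing}, Theorem \ref{theorem: comparison is equiv.}, Remark \ref{remark: based colax functors in a cartesian category}, and Theorem \ref{theorem: simpson}. Your added verification that (DISJ) (together with the standing cartesian hypothesis) supplies the hypotheses of each of the three cited equivalences is precisely the implicit content of the paper's one-line proof.
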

\begin{proof}
Combine Remark \ref{remark: based colax functors in a cartesian category}, Theorems \ref{theorem: comparison is equiv.} and \ref{theorem: simpson}, and Proposition \ref{proposition: DISJ implies decomposing}.
\end{proof}

\subsection{The non-cartesian case}\label{subsection: The non-cartesian case}

In \cite{bacard2010Segal}, Bacard introduced precategories enriched in a general (that is, not necessarily cartesian) monoidal category $\mathcal{V}$ as pairs $(X,S)$ with $S$ a set and $X$ a strongly unital, colax functor of bicategories
$$
X: \mathcal{P}_{\overline{S}}\rightarrow \mathcal{V}
$$
where $\mathcal{V}$ is considered as a bicategory with one object and $\mathcal{P}_{\overline{S}}$ is a certain $2$-category, which is a labeled version of $\fint^{op}$ analogously to how $\simp_{S}$ is a labeled version of $\simp$. For more details we defer to loc. cit., where $X$ is referred to as a \emph{unital $\overline{S}$-point}.

In particular, such an enriched precategory $(X,S)$ specifies objects $X(a_{0},\dots,a_{n})\in \mathcal{V}$ for each sequence $(a_{0},\dots,a_{n})$ of elements of $S$. Moreover, for each morphism $h: [m]\rightarrow [n]$ in $\fint$, we have a morphism
$$
X(a_{0},\dots,a_{n})\rightarrow X(a_{h(0)},\dots,a_{h(m)})
$$
in $\mathcal{V}$, as well as comultiplication maps (for all $k,l\geq 0$) and counit isomorphisms
$$
X(a_{0},\dots,a_{k+l})\rightarrow X(a_{0},\dots,a_{k})\otimes X(a_{k},\dots,a_{k+l})\quad \text{and}\quad X(a_{0})\xrightarrow{\sim} I
$$
satisfying coassociativity, counitality and naturality conditions.

Let us denote the category of $\mathcal{V}$-enriched precategories in this sense by $\mathbf{PC}(\mathcal{V})$ as well. If $\mathcal{V}$ is cartesian, this recovers the enriched precategories of \cite{lurie2009goodwillie}\cite{simpson2012homotopy} so there is no risk of confusion. Then, similarly to Construction \ref{construction: comparison functor}, we can construct a comparison functor
$$
\mathfrak{c}': \mathbf{PC}(\mathcal{V})\rightarrow \ts\mathcal{V}
$$
which is defined on objects as follows. Given a $\mathcal{V}$-enriched precategory $(X,S)$, we can define a templicial object $(\mathfrak{c}(X),S)$ by setting for all $a,b\in S$ and $n > 0$:
$$
\mathfrak{c}'(X)_{n}(a,b) = \coprod_{a_{1},\dots,a_{n-1}\in S}X(a,a_{1}\dots,a_{n-1},b)
$$
This clearly extends to a functor $\fint^{op}\rightarrow \mathcal{V}\Quiv_{S}$. The colax monoidal structure on $\mathfrak{c'}(X)$ is induced by the comultiplication maps of $X$, and the strong unitality by the counit isomorphisms of $X$.

In general, there is no hope for this comparison functor to be an equivalence however, even in cases of interest such as $\mathcal{V} = \Mod(k)$.

\begin{Ex}\label{example: non-cartesian temp. obj. vs precats}
Let $\mathcal{V} = \Mod(k)$ be the category of $k$-modules for a commutative unital ring $k$. Note that $\Mod(k)$ is decomposing by Proposition \ref{proposition: preadditive implies decomposing}. But
$$
\mathfrak{c}': \mathbf{PC}(\Mod(k))\rightarrow \ts\Mod(k)
$$
is not essentially surjective.

Indeed, let $Y$ be the templicial object of \cite[Example 2.10]{lowen2024enriched}. It has vertices $S = \{a,c_{0},c_{1},b\}$ and the modules $Y_{1}(x,y)$ for $x\neq y$ are generated by edges $f_{i}: a\rightarrow c_{i}$, $g_{i}: c_{i}\rightarrow b$ and $h: a\rightarrow b$. Further, it has $2$-simplex $w\in Y_{2}(a,b)$ satisfying
$$
d_{1}(w) = h\quad \text{and}\quad \mu_{1,1}(w) = f_{1}\otimes g_{1} + f_{2}\otimes g_{2}
$$
\begin{center}
\begin{tikzpicture}[scale=1.5]
\filldraw[fill=gray,opacity=0.3]
(0,-0.289) -- (0.2,0.7) -- (1,-0.289);
\filldraw[fill=gray,opacity=0.3]
(0,-0.289) -- (0.8,0.7) -- (1,-0.289);
\filldraw
(0,-0.289) circle (1pt) node[left]{$a$}
(0.2,0.7) circle (1pt) node[above]{$c_{1}$}
(0.8,0.7) circle (1pt) node[above]{$c_{2}$}
(1,-0.289) circle (1pt) node[right]{$b$}
(0.5,0) node{$w$};
\draw[-latex] (0,-0.289) -- node[left,pos=0.6]{$f_{1}$} (0.2,0.7);
\draw[-latex] (0,-0.289) -- node[left,pos=0.6]{$f_{2}$} (0.8,0.7);
\draw[-latex] (0.2,0.7) -- node[right,pos=0.4]{$g_{1}$} (1,-0.289);
\draw[-latex] (0.8,0.7) -- node[right,pos=0.4]{$g_{2}$} (1,-0.289);
\draw[-latex] (0,-0.289) -- node[below,pos=0.5]{$h$} (1,-0.289);
\end{tikzpicture}
\end{center}

Suppose there exists some $X\in \mathbf{PC}(\Mod(k))$, such that $\mathfrak{c}_{inner}(X)\simeq Y$. Then we would have an induced isomorphism in $\Mod(k)$:
$$
X(a,a,b)\oplus X(a,c_{0},b)\oplus X(a,c_{1},b)\oplus X(a,b,b)\simeq Y_{2}(a,b)
$$
And thus we can write $w = \sum_{x\in S}w_{x}$ with $\mu_{1,1}(w_{x})\in Y_{1}(a,x)\otimes Y_{1}(x,b)$. But no such $2$-simplices $w_{x}$ exist in $Y$.
\end{Ex}

\begin{Ex}\label{example: failure of dg-nerve for precats}
Let $k$ be a field and consider the templicial dg-nerve \cite{lowen2023frobenius}
$$
N^{dg}_{k}: k\Cat_{dg}\rightarrow \ts\Mod(k)
$$
from the category of small (homologically graded)  dg-categories to templicial $k$-modules. Then there exists no functor $N': k\Cat_{dg}\rightarrow \mathbf{PC}(\Mod(k))$ so that we have isomorphisms $N^{dg}_{k}\simeq \mathfrak{c}'\circ N'$ and $N'(\mathcal{C}^{op})\simeq N'(\mathcal{C})^{op}$ for any dg-category $\mathcal{C}$.

Indeed, consider the dg-category $\mathcal{C}_{\bullet}$ with object set $S = \{a,b\}$ and morphisms given by the following complexes:
$$
\mathcal{C}(a,a) = k\id_{a},\qquad \mathcal{C}(b,b) = k\id_{b},\qquad \mathcal{C}(a,b) = kw,\qquad \mathcal{C}(b,a) = 0
$$
with $\id_{a}$ and $\id_{b}$ of degree $0$ and $w$ of degree $1$. Then it follows from \cite[Example 3.19]{lowen2023frobenius} that the $2$-simplices from $a$ to $b$ of $N^{dg}_{k}(\mathcal{C})$ are given by 
$$
N^{dg}_{k}(\mathcal{C})_{2}(a,b)\simeq kw
$$
The key idea is that the $2$-simplex $w$ doesn't have a canonical inner vertex.

Note that we have an isomorphism $\varphi: \mathcal{C}^{op}\xrightarrow{\sim} \mathcal{C}$ which sends $a\mapsto b$, $b\mapsto a$ and $w\mapsto w$. So for any functor $N'$ as above, we would obtain induced isomorphisms
$$
N^{dg}_{k}(\mathcal{C})_{2}(a,b)\simeq N'(\mathcal{C})(a,a,b)\oplus N'(\mathcal{C})(a,b,b)\simeq N'(\mathcal{C})(a,a,b)^{2}
$$
this is impossible as seen by comparing dimensions.
\end{Ex}

\printbibliography

\end{document}